\newtheorem{theorem}{Theorem}[section]
\newtheorem{lemma}[theorem]{Lemma}
\newtheorem{proposition}[theorem]{Proposition}
\newtheorem{corollary}[theorem]{Corollary}
\newtheorem{remark}[theorem]{Remark}
\newtheorem{assumption}[theorem]{Assumption}
\numberwithin{equation}{section}
\newcommand{\mc}[1]{{\mathcal #1}}
\newcommand{\ms}[1]{{\mathscr #1}}
\newcommand{\mf}[1]{{\mathfrak #1}}
\newcommand{\mb}[1]{{\mathbf #1}}
\newcommand{\bb}[1]{{\mathbb #1}}
\newcommand{\bs}[1]{{\boldsymbol #1}}
\newcounter{as}[section]
\newtheorem{asser}[as]{Assertion}
\newcommand{\<}{\langle}
\renewcommand{\>}{\rangle}
\renewcommand{\Cap}{{\rm cap}}
\begin{document}

\title[Metastability of finite state Markov chains] {Metastability of
  finite state Markov chains: a recursive procedure to identify slow
  variables for model reduction}

\author{C. Landim, T. Xu}

\address{\noindent IMPA, Estrada Dona Castorina 110, CEP 22460-320 Rio
  de Janeiro, Brasil. \newline
\noindent CNRS UMR 6085, Universit\'e de Rouen, Avenue de
  l'Universit\'e, BP.12, Technop\^ole du Madril\-let, F76801
  Saint-\'Etienne-du-Rouvray, France.  \newline e-mail: \rm
  \texttt{landim@impa.br} }

\address{\noindent IMPA, Estrada Dona Castorina 110, CEP 22460 Rio de
 Janeiro, Brasil.  \newline e-mail: \rm
 \texttt{xu@impa.br} }

\begin{abstract}
  Consider a sequence $(\eta^N(t) :t\ge 0)$ of continuous-time,
  irreducible Markov chains evolving on a fixed finite set $E$,
  indexed by a parameter $N$. Denote by $R_N(\eta,\xi)$ the jump rates
  of the Markov chain $\eta^N_t$, and assume that for any pair of
  bonds $(\eta,\xi)$, $(\eta',\xi')$ $\arctan
  \{R_N(\eta,\xi)/R_N(\eta',\xi')\}$ converges as
  $N\uparrow\infty$. Under a hypothesis slightly more restrictive
  (cf. \eqref{mhyp} below), we present a recursive procedure which
  provides a sequence of increasing time-scales $\theta^1_N, \dots,
  \theta^{\mf p}_N$, $\theta^j_N \ll \theta^{j+1}_N$, and of
  coarsening partitions $\{\ms E^j_1, \dots, \ms E^j_{\mf n_j},
  \Delta^j\}$, $1\le j\le \mf p$, of the set $E$.  Let $\phi_j: E \to
  \{0,1, \dots, \mf n_j\}$ be the projection defined by $ \phi_j(\eta)
  = \sum_{x=1}^{\mf n_j} x \, \mb 1\{\eta \in \ms E^j_x\}$.  For each
  $1\le j\le \mf p$, we prove that the hidden Markov chain $X^j_N(t) =
  \phi_j(\eta^N(t\theta^j_N))$ converges to a Markov chain on $\{1,
  \dots, \mf n_j\}$.
\end{abstract}

\keywords{Metastability, Markov chains, slow variables, model reduction}

\maketitle

\section{Introduction}

This article has two motivations.  On the one hand, the metastable
behavior of non-reversible Markovian dynamics has attracted much
attention recently \cite{msv09, mo13, blm13, l14, m14, cns14, bg15,
  cg15, fmns15, fmnss15}. On the other hand, the emergence of large
complex networks gives a particular importance to the problem of data
and model reduction \cite{lv14, ce14, ag14}. This issue arises in as
diverse contexts as meteorology, genetic networks or protein folding,
and is very closely related to the identification of slow variables, a
fundamental tool in decreasing the degrees of freedom \cite{sekc15}.

Not long ago, Beltr\'an and one of the authors introduced a general
approach to derive the metastable behavior of continuous-time Markov
chains, particularly convenient in the presence of several valleys
with the same depth \cite{bl2, bl7, bl9}. In the context of finite
state Markov chains \cite{bl4}, it permits to identify the slow
variables and to reduce the model and the state space.

More precisely, denote by $E$ a finite set, by $\eta^N_t$ a sequence
of $E$-valued continuous-time, irreducible Markov chains, and by $\ms
E_1, \dots, \ms E_{\mf n}, \Delta$ a partition of the set $E$. Let
$\ms E = \cup_{1\le x \le \mf n} \ms E_x$ and let $\phi_{\ms
  E}: E \to \{0,1, \dots, \mf n\}$ be the projection defined by
\begin{equation*}
\phi_{\ms E}(\eta) \;=\; \sum_{x=1}^{\mf n} x \, \mb 1\{\eta
\in \ms E_x\} \;.
\end{equation*}
In general, $X_N(t) = \phi_{\ms E}(\eta^N_t)$ is not a Markov chain,
but only a hidden Markov chain.  We say that $\phi_{\ms E}$ is a slow
variable if there exists a time-scale $\theta_N$ for which the
dynamics of $X_N(t\theta_N)$ is asymptotically Markovian.

The set $\Delta$ plays a special role in the partition, separating the
sets $\ms E_1, \dots, \ms E_{\mf n}$, called here valleys. The chain
remains a negligible amount of time in the set $\Delta$ in the
time-scale $\theta_N$ at which the slow variable evolves.

Slow variables provide an efficient mechanism to contract the state
space and to reduce the model in complex networks, as it allows to
represent the original evolution through a simple Markovian chain
$X_N(t)$ which takes value in a much smaller set, without losing the
essential features of the dynamics.  It may also reveal aspects of the
dynamics which may not be apparent at first sight.

When the number of sets in the partition is reduced to $2$, $\mf n=2$,
and the Markov chain which describes the asymptotic behavior of the
slow variable has one absorbing point and one transient point, the
chain presents a metastable behavior. In a certain time-scale, it
remains for an exponential time on a subset of the state space after
which it jumps to another set where it remains for ever. By extension,
and may be inapropriately, we say that the chain $\eta^N_t$ exhibits a
metastable behavior among the valleys $\ms E_1, \dots, \ms E_{\mf n}$
in the time-scale $\theta_N$ whenever we prove the existence of a slow
variable.

We present in this article a recursive procedure which permits to
determine all slow variables of the chain. It provides a sequence of
time-scales $\theta^1_N, \dots, \theta^{\mf p}_N$ and of partitions
$\{\ms E^j_1, \dots, \ms E^j_{\mf n_j}, \Delta_j\}$, $1\le j\le \mf
p$, of the set $E$ with the following properties.

\begin{itemize}
\item The time-scales are increasing: $\lim_{N\to\infty} \theta^j_N
  /\theta^{j+1}_N =0$ for $1\le j<\mf p$. This relation is represented
  as $\theta^j_N \ll \theta^{j+1}_N$.

\item The partitions are coarser. Each set of the $(j+1)$-th partition
  is obtained as a union of sets in the $j$-th partition. Thus $\mf
  n_{j+1} < \mf n_j$ and for each $a$ in $\{1, \dots, \mf n_{j+1}\}$,
  $\ms E^{j+1}_a = \cup_{x\in A} \ms E^{j}_x$ for some subset $A$ of
  $\{1, \dots, \mf n_j\}$.

\item The sets $\Delta_j$, which separates the valleys, increase:
  $\Delta_j \subset \Delta_{j+1}$. Actually, $\Delta_{j+1}=\Delta_j
  \cup _{x\in B} \ms E^{j}_x$ for some subset $B$ of $\{1, \dots, \mf
  n_j\}$.

\item The projection $\Psi^j_N(\eta) \;=\; \sum_{1\le x\le \mf n_j} x \, \mb
  1\{\eta \in \ms E^j_x\} \;+\; N \, \mb 1\{\eta \in \Delta_j\}$ is a
  slow variable which evolves in the time-scale $\theta^j_N$.
\end{itemize}

We prove three further properties of the partitions $\{\ms E^j_1,
\dots, \ms E^j_{\mf n_j}, \Delta_j\}$.

\begin{itemize}
\item As mentioned above, the time the chain remains in the set
  $\Delta_j$ in the time-scale $\theta^j_N$ is negligible. We refer to
  condition (H3) below for a mathematical formulation of this
  assertion.

\item Starting from any configuration in $\ms E^j_x$, the chain
  $\eta^N_t$ attains the set $\cup_{y\not = x} \ms E^j_y$ at a time
  which is asymptotically exponential in the time-scale
  $\theta^j_N$ (cf. Remark \ref{rm1b}).

\item With a probability asymptotically equal to $1$, the chain
  $\eta^N_t$ visits all points of the set $\ms E^j_x$ before hitting 
another set $\ms E^j_y$ of the partition. In the terminology of Freidlin
  and Wentzell \cite{fw}, the sets of the first partition, denoted by
  $\ms E^1_x$,  are cycles while the set of the following partitions
  are cycles of cycles. 
\end{itemize}

These results have been proved in \cite{bl4} for finite state
reversible Markovian dynamics. We remove in this article the
assumption of reversibility and we simplify some proofs.

In contrast with other approaches \cite{mnos04, ov1, ev06, msv09,
  cns14, fmns15, fmnss15}, we do not describe the tube of typical
trajectories in a transition between two valleys, nor do we identify
the critical configurations which are visited with high probability in
such transitions. 

The arguments presented here have been designed for sequences of
Markov chains. The examples we have in mind are zero-temperature
limits of non-reversible dynamics in a finite state space. It is not
clear whether the analysis can be adapted to handle the case of a
single fixed dynamics as in \cite{ce14, lv14, ag14}.

The approach presented in this article is based on a multiscale
analysis. The sequence of increasing time-scales is defined in terms
of the depth of the different valleys. In this sense, the method is
similar to the one proposed by Scoppola in \cite{s93}, and developed
by Olivieri and Scoppola \cite{os95, os96}, but it does not require
the valleys to have exponential depth, nor the jump rates to be
expressed in terms of exponentials. Actually, one of its main merit is
that it relies on a minimal hypothesis, presented in \eqref{mhyp}
below, which is very easy to check since it is formulated only in
terms of the jump rates. 

The article is organized as follows. In Section \ref{not} we state the
main results. In the following three sections we introduce the tools
needed to prove these results, which is carried out in the last three
sections.

\section{Notation and main results}
\label{not}

Consider a finite set $E$. The elements of $E$ are called
configurations and are denoted by the Greek letters $\eta$, $\xi$,
$\zeta$. Consider a sequence of continuous-time, $E$-valued,
irreducible Markov chains $\{\eta^N_t : t\ge 0\}$.  Denote the jump
rates of $\eta^N_t$ by $R_N(\eta,\xi)$, and by $\mu_N$ the unique
invariant probability measure.

Denote by $D(\bb R_+, E)$ the space of right-continuous functions $x:
\bb R_+ \to E$ with left-limits endowed with the Skorohod topology,
and by $\bb P_\eta=\bb P^N_\eta$, $\eta\in E$, the probability measure
on the path space $D(\bb R_+, E)$ induced by the Markov chain
$\eta^N_t$ starting from $\eta$. Expectation with respect to $\bb
P_\eta$ is represented by $\bb E_\eta$.

Denote by $H_{\ms A}$, $H^+_{\ms A}$, ${\ms A}\subset E$, the
hitting time and the time of the first return to ${\ms A}$:
\begin{equation}
\label{201}
H_{\ms A} \;=\; \inf \big \{t>0 : \eta^N_t \in {\ms A} \big\}\;,
\quad
H^+_{\ms A} \;=\; \inf \big \{t>\tau_1 : \eta^N_t \in {\ms A} \big\}\; ,  
\end{equation}
where $\tau_1$ represents the time of the first jump of the chain
$\eta^N_t$: $\tau_1 = \inf\{t>0 : \eta^N_t \not = \eta^N_0\}$.  

Denote by $\lambda_N(\eta)$, $\eta\in E$, the holding rates of the
Markov chain $\eta^N_t$ and by $p_N(\eta,\xi)$, $\eta$, $\xi\in E$,
the jump probabilities, so that $R_N(\eta,\xi) = \lambda_N(\eta)
p_N(\eta,\xi)$.  For two disjoint subsets $\ms A$, $\ms B$ of $E$, denote by
$\Cap_N(\ms A, \ms B)$ the capacity between $\ms A$ and $\ms B$:
\begin{equation}
\label{28}
\Cap_N(\ms A, \ms B) \;=\; \sum_{\eta\in \ms A} \mu_N(\eta)\, \lambda_N(\eta) 
\, \bb P_{\eta} [H_{\ms B} < H^+_{\ms A}]\;.
\end{equation}

Consider a partition $\ms E_1, \dots, \ms E_{\mf n}$, $\Delta$ of the
set $E$, which does not depend on the parameter $N$ and such that $\mf
n\ge 2$. Fix two sequences of positive real numbers $\alpha_N$,
$\theta_N$ such that $\alpha_N\ll \theta_N$, where this notation
stands for $\lim_{N\to\infty} \alpha_N/\theta_N = 0$.

Let $\ms E = \cup_{x\in S} \ms E_x$, where $S=\{1, \dots, \mf n\}$.
Denote by $\{\eta^{\ms E}_t: t\ge 0\}$ the trace of $\{\eta^N_t: t\ge
0\} $ on $\ms E$, and by $R^{\ms E}_N : \ms E \times \ms E\to \bb R_+$
the jump rates of the trace process $\eta^{\ms E}_t$. We refer
to Section 6 of \cite{bl2} for a definition of the trace
process. Denote by $r^{\ms E}_N(\ms E_x,\ms E_y)$ the mean rate at
which the trace process jumps from $\ms E_x$ to $\ms E_y$:
\begin{equation}
\label{b04}
r^{\ms E}_N(\ms E_x,\ms E_y) \; = \; \frac{1}{\mu_N(\ms E_x)}
\sum_{\eta\in\ms E_x} \mu_N(\eta) 
\sum_{\xi\in\ms E_y} R^{\ms  E}_N(\eta,\xi) \;.
\end{equation}
Assume that for every $x\not =y\in S$,
\begin{equation*}
\tag*{\bf (H1)}
\begin{split}
& r_{\ms E}(x,y) \;:=\;  \lim_{N\to\infty} \theta_N
\, r^{\ms E}_N(\ms E_x,\ms E_y) \;\in\; \bb R_+\;, \\
&\quad \text{and that}\quad \sum_{x\in S} \sum_{y\not = x} r_{\ms
  E}(x,y) >0\;.
\end{split}
\end{equation*}  
The symbol $:=$ in the first line of the previous displayed equation
means that the limit exists, that it is denoted by $r_{\ms E}(x,y)$,
and that it belongs to $\bb R_+$. This convention is used throughout
the article.

Assume that for every $x\in S$ for which $\ms E_x$ is not a
singleton and for all $\eta\not =\xi\in \ms E_x$,
\begin{equation*}
\tag*{\bf (H2)}
\liminf_{N\to \infty} \alpha_N\, \frac{\Cap_N(\eta,\xi)}
{\mu_N(\ms E_x)}\;>\;0\; .
\end{equation*}

Finally, assume that in the time scale $\theta_N$ the chain remains
a negligible amount of time outside the set $\ms E$:
For every $t>0$,
\begin{equation*}
\tag*{\bf (H3)}
\lim_{N\to \infty} \max_{\eta\in E} \, \bb E_\eta \Big[
\int_0^t \mb 1\{ \eta^N_{s\theta_N} \in \Delta\} \, ds  \Big] \;=\; 0\;. 
\end{equation*}

Denote by $\Psi_N : E\to \{1, \dots, \mf n, N \}$ the projection defined
by $\Psi_N(\eta) = x$ if $\eta\in \ms E_x$, $\Psi_N(\eta) = N$,
otherwise:
\begin{equation*}
\Psi_N(\eta) \;=\; \sum_{x\in S} x \, \mb 1\{\eta \in \ms E_x\} 
\;+\; N \, \mb 1 \{\eta \in \Delta \}\;.
\end{equation*}
Recall from \cite{l-soft} the definition of the soft topology.  

\begin{theorem}
\label{s01}
Assume that conditions {\rm (H1)--(H3)} are in force.  Fix $x\in S$
and a configuration $\eta\in\ms E_x$. Starting from $\eta$, the
speeded-up, hidden Markov chain $\bs X_N(t) =
\Psi_N\big(\eta^N(\theta_N t)\big)$ converges in the soft topology to
the continuous-time Markov chain $X_{\ms E}(t)$ on $\{1, \dots, \mf
n\}$ whose jump rates are given by $r_{\ms E}(x,y)$ and which starts
from $x$.
\end{theorem}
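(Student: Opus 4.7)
The plan is to prove the theorem in three stages: first establish convergence of the trace process on $\ms E$, then equilibrate within each valley using (H2), and finally transfer to the original projection using (H3) together with the definition of the soft topology.

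First I would work with the trace process $\eta^{\ms E}_t$ of $\eta^N_t$ on $\ms E = \cup_{x \in S} \ms E_x$ and consider the projected chain $Y_N(t) = \phi_{\ms E}(\eta^{\ms E}(t\theta_N))$, where $\phi_{\ms E}(\eta) = x$ for $\eta \in \ms E_x$. By the general Beltr\'an--Landim criterion for the convergence of projections of Markov chains (as in Section 2 of \cite{bl2}), convergence of $Y_N$ in the Skorohod topology to $X_{\ms E}$ follows once one checks (a) that the aggregated rates $\theta_N\, r^{\ms E}_N(\ms E_x,\ms E_y)$ converge to $r_{\ms E}(x,y)$, and (b) that the trace process equilibrates inside each valley $\ms E_x$ on a time-scale $o(\theta_N)$. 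Condition (a) is immediate from (H1). For (b) one needs that, starting from any $\eta \in \ms E_x$, the chain visits every other $\xi \in \ms E_x$ before exiting $\ms E_x$ with high probability, and that the expected time to do so is negligible compared to $\theta_N$.

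The key step is extracting (b) from hypothesis (H2). Via the standard bound relating hitting times to capacities (\`a la Ochab--Gaudillière type estimates, $\bb E_\eta[H_{\{\xi\}}] \le \mu_N(\ms E_x)/\Cap_N(\eta,\xi)$ up to constants, or more precisely the sector-condition--free version proved through the Dirichlet/Thomson variational principle), (H2) yields $\bb E_\eta[H_{\xi}] \le C\,\alpha_N$ for all $\eta,\xi \in \ms E_x$. Combined with the estimate $\bb E_\eta[H_{\ms E \setminus \ms E_x}] \asymp \theta_N$ coming from (H1), and using $\alpha_N \ll \theta_N$, a Markov/union argument shows that with probability tending to one the trace process visits every point of $\ms E_x$ before leaving $\ms E_x$, and that the empirical distribution on $\ms E_x$ along each sojourn is close to the quasi-stationary (equivalently, the conditional invariant) measure $\mu_N(\cdot \mid \ms E_x)$. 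This is precisely what is needed to identify the effective jump rates of the limiting chain with the $r_{\ms E}(x,y)$ appearing in (H1).

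Finally, I would relate $\bs X_N(t) = \Psi_N(\eta^N(t\theta_N))$ to $Y_N(t)$. Let $T_N(t) = \int_0^t \mb 1\{\eta^N_s \in \ms E\}\,ds$ be the clock of the trace process and $S_N$ its generalized inverse. Then $\Psi_N(\eta^N(t\theta_N))$ coincides with $Y_N(T_N(t\theta_N)/\theta_N)$ whenever $\eta^N(t\theta_N) \in \ms E$, and equals $N$ otherwise. By (H3), $\sup_{0 \le s \le t} |s - T_N(s\theta_N)/\theta_N| \to 0$ in $\bb P_\eta$-probability, so $S_N$ converges to the identity and the excursions into $\Delta$ (which are precisely the time intervals on which $\bs X_N$ takes the value $N$) have total Lebesgue measure tending to zero. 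Since the soft topology of \cite{l-soft} is insensitive exactly to such short excursions and to time-changes converging to the identity, convergence of $Y_N \to X_{\ms E}$ in Skorohod lifts to convergence of $\bs X_N \to X_{\ms E}$ in the soft topology. The main obstacle I expect is step (b): getting from the scalar capacity lower bound (H2) to the joint statement that every point of $\ms E_x$ is visited before leaving $\ms E_x$, while simultaneously keeping the equilibration time short enough to be compatible with (H1); once this is in hand, the rest is packaging through known trace-process machinery and the soft topology.
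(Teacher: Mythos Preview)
Your three-stage plan is correct and matches the paper's argument: the paper proves Skorohod convergence of the projected trace process (its Proposition~\ref{s04}) by invoking Theorem~2.1 of \cite{bl7}, and then passes to the soft topology via Theorem~5.1 of \cite{l-soft} together with (H3). The only difference is in how step~(b) is verified: where you work through hitting-time bounds and an explicit ``visit every point before exiting'' argument, the paper checks the single capacity-ratio condition $\Cap_N(\ms E_x,\breve{\ms E}_x)/\Cap_N(\eta,\xi)\to 0$ (its Lemma~\ref{s02}), which follows in two lines from (H1), (H2) and $\alpha_N\ll\theta_N$ and is exactly the hypothesis that \cite{bl7} requires; this sidesteps the obstacle you flagged.
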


This theorem is a straightforward consequence of known results. We
stated it here in sake of completeness and because all the analysis of
the metastable behavior of $\eta^N_t$ relies on it.


\begin{remark}
\label{rm1a}
Theorem \ref{s01} states that in the time scale $\theta_N$, if we just
keep track of the set $\ms E_x$ where $\eta^N_t$ is and not of the
specific location of the chain, we observe an evolution on the set $S$
close to the one of a continuous-time Markov chain which jumps from
$x$ to $y$ at rate $r_{\ms E}(x,y)$.
\end{remark}

\begin{remark}
\label{rm1d}
The function $\Psi_N$ represents a slow variable of the chain. Indeed,
we will see below that the sequence $\alpha^{-1}_N$ stands for the
order of magnitude of the jump rates of the chain. Theorem \ref{s01}
states that on the time scale $\theta_N$, which is much longer than
$\alpha_N$, the variable $\Psi_N(\eta^N_t)$ evolves as a Markov
chain. In other words, under conditions (H1)--(H3), one still observes
a Markovian dynamics after a contraction of the configuration space
through the projection $\Psi_N$. Theorem \ref{s01} provides therefore
a mechanism of reducing the degrees of freedom of the system, keeping
the essential features of the dynamics, as the ergodic properties.
\end{remark}

\begin{remark}
\label{rm1b}
It also follows from assumptions (H1)--(H3) that the exit time from a
set $\ms E_x$ is asymptotically exponential. More precisely, let
$\breve{\ms E}_x$, $x\in S$, be the union of all set $\ms E_y$ except
$\ms E_x$:
\begin{equation}
\label{27}
\breve{\ms E}_x \;=\; \bigcup_{y\not = x} {\ms E}_y\;.
\end{equation} 
For every $x\in S$ and $\eta\in\ms E_x$, under $\bb P_\eta$ the
distribution of $H_{\breve{\ms E}_x}/\theta_N$ converges to an
exponential distribution.
\end{remark}

\begin{remark}
\label{rm1c}
Under the assumptions (H1)--(H3), the sets $\ms E_x$ are cycles in the
sense of \cite{fw}. More precisely, for every $x\in S$ for which
$\ms E_x$ is a not a singleton, and for all $\eta\not = \xi\in\ms
E_x$, 
\begin{equation*}
\lim_{N\to\infty} \bb P_\eta \big[ H_{\xi} < H_{\breve{\ms E}_x}\big]
\;=\; 1\;.  
\end{equation*}
This means that starting from $\eta\in\ms E_x$, the chain visits all
configurations in $\ms E_x$ before hitting the set $\breve{\ms E}_x$.
\end{remark}

\subsection{The main assumption}
\label{ss01}

We present in this subsection the main and unique hypothesis made on
the sequence of Markov chains $\eta^N_t$.  Fix two configurations
$\eta\not =\xi\in E$.  We assume that the jump rate from $\eta$ to
$\xi$ is either constant equal to $0$ or is always strictly positive:
\begin{equation*}
R_N(\eta,\xi) \;=\; 0 \;\;\text{for all $N\ge 1$}\;\;\text{or}\;\;
R_N(\eta,\xi) \;>\; 0 \;\;\text{for all $N\ge 1$}\;.
\end{equation*}
This assumption permits to define the set of ordered bonds of $E$,
denoted by $\bb B$, as the set of ordered pairs $(\eta,\xi)$ such that
$R_N(\eta,\xi)>0$:
\begin{equation*}
\bb B \;=\; \big \{(\eta,\xi) \in E\times E : \eta\not = \xi \,,\,
R_N(\eta,\xi)>0 \big\}\;.
\end{equation*}
Note that the set $\bb B$ does not depend on $N$.

Our analysis of the metastable behavior of the sequence of Markov
chain $\eta^N_t$ relies on the assumption that the set of ordered
bonds can be divided into equivalent classes in such a way that the
all jump rates in the same equivalent class are of the same order,
while the ratio between two jump rates in different classes either
vanish in the limit or tend to $+\infty$. Some terminology is
necessary to make this notion precise.

\smallskip\noindent\emph{Ordered sequences}: Consider a set of
sequences $(a^r_N: N\ge 1)$ of nonnegative real numbers indexed by
some finite set $r\in \mf R$. The set $\mf R$ is said to be
\emph{ordered} if for all $r\not = s\in\mf R$ the sequence $\arctan
\{a^r_N/ a^s_N\}$ converges as $N\uparrow\infty$.

In the examples below the set $\mf R$ will be the set of
configurations $E$ or the set of bonds $\bb B$.  Let $\bb Z_+ = \{0,
1, 2, \dots \}$, and let $\mf A_m$, $m\ge 1$, be the set of functions
$k:\bb B \to \bb Z_+$ such that $\sum_{(\eta,\xi)\in\bb B} k(\eta,\xi)
=m$.

\begin{assumption}
\label{mhyp} 
We assume that for every $m\ge 1$ the set of sequences
\begin{equation*}
\Big\{ \prod_{(\eta,\xi)\in\bb B} R_N(\eta,\xi)^{k(\eta,\xi)} : N \ge 1\Big\}
\;,\quad k\in\mf A_m
\end{equation*}
is ordered. 
\end{assumption}

We assume from now on that the sequence of Markov chains $\eta^N_t$
fulfills Assumption \ref{mhyp}. In particular, the sequences
$\{R_N(\eta,\xi) : N\ge 1\}$, $(\eta, \xi) \in \bb B$, are ordered.

\subsection{The shallowest valleys, the fastest slow variable}
\label{sec06}

We identify in this subsection the shortest time-scale at which a
metastable behavior is observed, we introduce the shallowest valleys,
and we prove that these valleys form a partition which fulfills
conditions (H1)--(H3).

We first identify the valleys. Let
\begin{equation*}
\frac 1{\alpha_N} \;=\; \sum_{\eta \in E} \sum_{\xi: \xi\not = \eta} 
R_N(\eta,\xi) \;.  
\end{equation*}
We could also have defined $\alpha^{-1}_N$ as $\max \{ R_N(\eta,\xi) :
(\eta, \xi) \in \bb B\}$.  By Assumption \ref{mhyp}, for every
$\eta\not = \xi \in E$, $\alpha_N R_N(\eta,\xi)\to R(\eta,\xi)\in
[0,1]$. Let $\lambda(\eta) = \sum_{\xi\not =\eta} R(\eta,\xi) \in \bb
R_+$, and denote by $E_0$ the subset of points of $E$ such that
$\lambda(\eta)>0$. For all $\eta\in E_0$ let
$p(\eta,\xi)=R(\eta,\xi)/\lambda(\eta)$. It is clear that for all
$\eta$, $\zeta$ in $E$, $\xi\in E_0$,
\begin{equation}
\label{01}
\lim_{N\to\infty} \alpha_N \, \lambda_N(\eta)\;=\lambda(\eta)\;,
\quad \lim_{N\to\infty} p_N(\xi,\zeta) \;=\; p(\xi,\zeta)\;.
\end{equation}

Denote by $X_R(t)$ the $E$-valued Markov chain whose jump rates
are given by $R(\eta,\xi)$. Not that this Markov chain might not be
irreducible. However, by definition of $\alpha_N$, there is at least
one bond $(\eta,\xi)\in \bb B$ such that $R(\eta,\xi)>0$.

Denote by $\ms E_1, \ms E_2, \dots, \ms E_{\mf n}$ the recurrent
classes of the Markov chain $X_R(t)$, and by $\Delta$ the set of
transient points, so that $\{\ms E_1, \dots, \ms E_{\mf n}, \Delta\}$
forms a partition of $E$:
\begin{equation}
\label{17}
E \;=\; \ms E \sqcup \Delta \;, \quad
\ms E \;=\; \ms E_1 \sqcup  \cdots \sqcup \ms E_{\mf n}
\; .
\end{equation}
Here and below we use the notation $\ms A\sqcup \ms B$ to represent
the union of two disjoint sets $\ms A$, $\ms B$: $\ms A\sqcup \ms B =
\ms A\cup \ms B$, and $\ms A\cap \ms B=\varnothing$.

Note that the sets $\ms E_x$, $x\in S= \{1, \dots, \mf n\}$, do not
depend on $N$.  If $\mf n=1$, the chain does not possess valleys. This
is the case, for instance, if the rates $R_N(x,y)$ are independent of
$N$. Assume, therefore, and up to the end of this subsection, that
$\mf n\ge 2$.

Let $\theta_N$ be defined by
\begin{equation}
\label{05}
\frac 1{\theta_N} \;=\;  \sum_{x\in S} 
\frac{\Cap_N (\ms E_x, \breve{\ms E}_x)} {\mu_N(\ms E_x)}\;. 
\end{equation}

\begin{theorem}
\label{mt1}
The partition $\ms E_1, \dots, \ms E_{\mf n}, \Delta$ and the time
scales $\alpha_N$, $\theta_N$ fulfill the conditions {\rm
  (H1)--(H3)}. Moreover, For every $x\in S$ and every $\eta\in \ms
E_x$, there exists $m_x(\eta)\in (0,1]$ such that
\begin{equation*}
\tag*{\bf (H0)}
\lim_{N\to\infty} \frac{\mu_N(\eta)}{\mu_N(\ms E_x)}\;=\; m_x(\eta)\;.
\end{equation*}
\end{theorem}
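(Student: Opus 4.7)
The plan is to analyze $\eta^N_t$ on the fast time-scale $\alpha_N$, on which it is well approximated by the limit Markov chain $X_R$, and to combine this approximation with standard capacity estimates to verify each of (H0)--(H3) in turn. The normalization of $\alpha_N$ makes a single jump of $\eta^N$ typically occur in time $\alpha_N$, while $\theta_N$ is by construction the time at which the chain escapes from a typical recurrent class of $X_R$.

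I would first prove (H0). Since $\ms E_x$ is a recurrent class of $X_R$, the restricted chain $X_R|_{\ms E_x}$ is irreducible with a unique invariant probability $m_x$, which is automatically strictly positive on $\ms E_x$. The trace of $\eta^N$ on $\ms E_x$ has $\mu_N(\cdot)/\mu_N(\ms E_x)$ as its (unique) invariant measure, and when sped up by $\alpha_N$ its jump rates converge to those of $X_R|_{\ms E_x}$: the direct contribution $\alpha_N R_N(\eta,\xi)$ converges to $R(\eta,\xi)$ by \eqref{01}, while the indirect contributions coming from excursions outside $\ms E_x$ are negligible because, from $\eta\in \ms E_x$, the probability $p_N(\eta, E\setminus \ms E_x)$ of a jump out of $\ms E_x$ tends to $0$ (no bond $\eta \to \zeta$ leaves a recurrent class in $X_R$). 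Continuity of the invariant distribution of an irreducible finite chain in its rates then gives $\mu_N(\cdot)/\mu_N(\ms E_x) \to m_x$, which is (H0).

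Condition (H3) reduces to transience of $\Delta$ for $X_R$: under $\bb P_\eta$ with $\eta \in \Delta$, the hitting time $H_{\ms E}$ is $O(\alpha_N)$ in probability and hence $o(\theta_N)$. For excursions from $\ms E$ back into $\Delta$ I would use the time-change to the trace on $\ms E$: each such excursion lasts $O(\alpha_N)$, and the number of excursions during $[0,t\theta_N]$ is dominated by the number of jumps of the trace, yielding a total time in $\Delta$ of order $\alpha_N/\theta_N \to 0$. For (H2), irreducibility of $X_R|_{\ms E_x}$ provides, for any $\eta\neq \xi$ in $\ms E_x$, a path of bonds with $R(\cdot,\cdot)>0$ joining $\eta$ to $\xi$ inside $\ms E_x$; a flow- or Dirichlet-form-based lower bound on capacity then gives $\Cap_N(\eta,\xi) \gtrsim \mu_N(\ms E_x)/\alpha_N$, which combined with (H0) yields (H2).

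For (H1), one expands $r^{\ms E}_N(\ms E_x,\ms E_y)$ as a sum of products of direct rates $R_N(\cdot,\cdot)$ weighted by hitting-type probabilities on $\Delta$ that themselves reduce to ratios of products of the $R_N$'s. Assumption \ref{mhyp} guarantees that every such ratio converges in $[0,\infty]$, and the normalization of $\theta_N$ ensures that $\theta_N r^{\ms E}_N(\ms E_x,\ms E_y)$ converges in $\bb R_+$ with strictly positive sum. The main obstacle is precisely (H1): tracking and identifying the correct asymptotic order of the mixed products arising from $\Delta$-excursions between distinct recurrent classes is where the full ordering hypothesis of Assumption \ref{mhyp} becomes essential. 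By contrast (H0), (H2) and (H3) reduce to relatively standard continuity and transience arguments on the limiting chain $X_R$.
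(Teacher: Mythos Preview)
Your overall strategy is close to the paper's, and for (H0) your approach via continuity of the stationary distribution of the trace on $\ms E_x$ is a clean alternative. The paper instead argues directly: using stationarity of $\mu_N$ along a path in $\ms E_x$ with $R(\eta_i,\eta_{i+1})>0$ it gets $\liminf \mu_N(\xi)/\mu_N(\eta)>0$, and then invokes the ordering of the measures (Lemma~\ref{as02}, a consequence of Assumption~\ref{mhyp}) to upgrade $\liminf$ to an actual limit. Your route avoids the ordering lemma at this step, at the price of checking that the indirect contributions to the trace rates vanish, which you do correctly.

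For (H2) and (H3) you are essentially aligned with the paper (path lower bound on capacity for (H2); transience of $\Delta$ under $X_R$ for (H3)), though your excursion-counting bound for (H3) is loose. The paper's version (Lemma~\ref{s17}) rescales to the $\alpha_N$ time-scale and uses only two facts: $H_{\ms E}=O(\alpha_N)$ from $\Delta$, and $\bb P_\xi[H_\Delta\le T\alpha_N]\to 0$ for $\xi\in\ms E$ because $\alpha_N R_N(\xi,\zeta)\to 0$ for every $\zeta\in\Delta$. This avoids having to control the \emph{number} of excursions into $\Delta$, which is not obviously bounded on the interval $[0,t\theta_N]$.

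The genuine gap is in (H1), together with the unstated prerequisite $\alpha_N\ll\theta_N$. You say that ``the normalization of $\theta_N$ ensures that $\theta_N\, r^{\ms E}_N(\ms E_x,\ms E_y)$ converges'', but $\theta_N$ is defined through \emph{capacities}, while $r^{\ms E}_N$ is defined through \emph{trace rates}, and nothing in your outline bridges the two. The paper's device is to introduce an auxiliary scale $\gamma_N^{-1}=\sum_{x}\sum_{\eta\in\ms E_x}\sum_{\xi\in\breve{\ms E}_x} R^{\ms E}_N(\eta,\xi)$, so that $\gamma_N R^{\ms E}_N(\eta,\xi)$ converges directly by the ordering of the trace rates (Assertion~\ref{as01}). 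One then uses the identity \eqref{12}, $\Cap_N(\ms E_x,\breve{\ms E}_x)=\sum_{\eta\in\ms E_x}\mu_N(\eta)\sum_{\xi\in\breve{\ms E}_x}R^{\ms E}_N(\eta,\xi)$, together with (H0), to show $\gamma_N/\theta_N\to\ell\in(0,\infty)$; this is exactly what converts information about trace rates into information about $\theta_N$, and it simultaneously yields $\alpha_N\ll\gamma_N\sim\theta_N$ (Assertions~\ref{as06} and~\ref{as04}). Your sentence ``Assumption~\ref{mhyp} guarantees that every such ratio converges'' is essentially Assertion~\ref{as01}, but without the capacity--trace-rate identity \eqref{12} and the auxiliary scale $\gamma_N$ you have no mechanism to pass from convergence of \emph{ratios} of trace rates to convergence of $\theta_N\, r^{\ms E}_N$. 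That link, not the combinatorics of the $\Delta$-excursions, is the missing idea.
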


\begin{remark}
\label{rm1}
The jump rates $r_{\ms E}(x,y)$ which appear in condition (H1) are
introduced in Lemma \ref{s06}. It follows from Theorems \ref{s01} and
\ref{mt1} that in the time-scale $\theta_N$ the chain $\eta^N_t$
evolves among the sets $\ms E_x$, $x\in S$, as a Markov chain which
jumps from $x$ to $y$ at rate $r_{\ms E}(x,y)$.
\end{remark}

In the next three remarks we present some outcomes of Theorem
\ref{s01} and \ref{mt1} on the evolution of the chain $\eta^N_t$ in a
time-scale longer than $\theta_N$. These remarks anticipate the
recursive procedure of the next subsection.

\begin{remark}
\label{rm2}
The jump rates $r_{\ms E}(x,y)$ define a Markov chain on $S$,
represented by $X_{\ms E}(t)$. Denote by $T$ the set of transient
points of this chain and assume that $T\not = \varnothing$. It
follows from Theorem \ref{s01} that in the time-scale $\theta_N$,
starting from a set $\ms E_x$, $x\in T$, the chain $\eta^N_t$
leaves the set $\ms E_x$ at an asymptotically exponential time, and 
never returns to $\ms E_x$ after a finite number of visits to this
set. In particular, if we observe the chain $\eta^N_t$ in a longer
time-scale than $\theta_N$, starting from $\ms E_x$ the chain 
remains only a negligible amount of time at $\ms E_x$.
\end{remark}

\begin{remark}
\label{rm3}
Denote by $A$ the set of absorbing points of $X_{\ms E}(t)$, and
assume that $A\not = \varnothing$. In this case, in the time-scale
$\theta_N$, starting from a set $\ms E_x$, $x\in A$, the chain
$\eta^N_t$ never leaves the set $\ms E_x$. To observe a non-trivial
behavior starting from this set one has to consider longer-time
scales.
\end{remark}

\begin{remark}
\label{rm4}
Finally, denote by $\ms C_1, \dots, \ms C_{\mf p}$ the equivalent
classes of $X_{\ms E}(t)$. Suppose that there is a class, say $\ms C_1$, of
recurrent points which is not a singleton. In this case, starting from
a set $\ms E_x$, $x\in\ms C_1$, in the time-scale $\theta_N$, the
chain $\eta^N_t$ leaves the set $\ms E_x$ at an asymptotically
exponential time, and returns to $\ms E_x$ infinitely many times.

Suppose now that there are at least two classes, say $\ms C_1$ and
$\ms C_{2}$, of recurrent points. This means that in the time-scale
$\theta_N$, starting from a set $\ms E_x$, $x\in \ms C_1$, the process
never visits a set $\ms E_y$ for $y\in \ms C_2$. For this to occur one
has to observe the chain $\eta^N_t$ in a longer time-scale. 

Denote by $R_1, \dots, R_{\mf m}$ the recurrent classes of $X_{\ms
  E}(t)$.  In the next subsection, we derive a new time-scale at which
one observes jumps from sets of the form $\ms F_a = \cup_{x\in R_a}
\ms E_x$ to sets of the form $\ms F_b = \cup_{x\in R_b} \ms E_x$.
\end{remark}

\subsection{All deep valleys and  slow variables}

We obtained in the previous subsection two time-scales $\alpha_N$,
$\theta_N$, and a partition $\ms E_1, \dots, \ms E_{\mf n}, \Delta$ of
the state space $E$ which satisfy conditions (H0)--(H3).  We present
in this subsection a recursive procedure. Starting from two
time-scales $\beta^-_N$, $\beta_N$, and a partition $\ms F_1, \dots,
\ms F_{\mf p}, \Delta_{\ms F}$ of the state space $E$ satisfying the
assumptions (H0)--(H3) and such that $\mf p\ge 2$, it provides a
longer time-scale $\beta^+_N$ and a coarser partition $\ms G_1, \dots,
\ms G_{\mf q}, \Delta_{\ms G}$ which fulfills conditions (H0)--(H3)
with respect to the sequences $\beta_N$, $\beta^+_N$.

Consider a partition $\ms F_1, \dots, \ms F_{\mf p}$, $\Delta_{\ms F}$
of the set $E$ and two sequences $\beta^-_N$, $\beta_N$ such that
$\beta^-_N/\beta_N \to 0$. Assume that $\mf p\ge 2$ and that the
partition and the sequences $\beta^-_N$, $\beta_N$ satisfy conditions
(H0)--(H3). Denote by $r_{\ms F}(x,y)$ the jump rates appearing in
assumption (H1).

\smallskip\noindent\emph{The coarser partition}. Let $P=\{1, \dots,
\mf p\}$ and let $X_{\ms F}(t)$ be the $P$-valued Markov chain whose
jumps rates are given by $r_{\ms F}(x,y)$.

Denote by $G_1, G_2, \dots, G_{\mf q}$ the recurrent classes of the
chain $X_{\ms F}(t)$, and by $G_{\mf q+1}$ the set of transient
points. The sets $G_1, \dots, G_{\mf q+1}$ form a partition of $P$. We
claim that $\mf q< \mf p$. Fix $x\in P$ such that $\sum_{y\not = x}
r_{\ms F}(x,y)>0$, whose existence is guaranteed by hypothesis
(H1). Suppose that the point $x$ is transient. In this case the number
of recurrent classes must be smaller than $\mf p$. If, on the other
hand, $x$ is recurrent, the recurrent class which contains $x$ must
have at least two elements, and the number of recurrent classes must
be smaller than $\mf p$.

Let $Q=\{1, \dots, \mf q\}$, 
\begin{equation}
\label{b12}
\ms G_a \;=\; \bigcup_{x\in G_a} \ms F_x\;, \quad
\Delta_* \;=\; \bigcup_{x\in G_{\mf q+1}} \ms F_x\;, \quad
\Delta_{\ms G} = \Delta_{\ms F} \cup \Delta_*\;, \quad
a\in Q\;.
\end{equation}
Since, by \eqref{17}, $\{\ms F_1, \dots, \ms F_{\mf p}, \Delta_{\ms
  F}\}$ forms a partition of $E$, $\{\ms G_1, \dots, \ms G_{\mf q},
\Delta_{\ms G}\}$ also forms a partition of $E$:
\begin{equation}
\label{b10}
E \;=\; \ms G \sqcup \Delta_{\ms G} \;, \quad
\ms G \;=\; \ms G_1 \sqcup  \cdots \sqcup \ms G_{\mf q}
\;.
\end{equation}

\smallskip\noindent\emph {The longer time-scale}.  For $a\in Q = \{1,
\dots, \mf q\}$, let $\breve{\ms G}_a$ be the union of all leaves
except $\ms G_a$:
\begin{equation*}
\breve{\ms G}_a \;=\; \bigcup_{b\not = a} \ms G_b\;.
\end{equation*}
Assume that $\mf q>1$, and let $\beta^+_N$ be given by
\begin{equation}
\label{b05}
\frac 1{\beta^+_N} \;=\;  \sum_{a\in Q} 
\frac{\Cap_N (\ms G_a, \breve{\ms G}_a)} {\mu_N(\ms G_a)}\;. 
\end{equation}

\begin{theorem}
\label{mt2}
The partition $\ms G_1, \dots, \ms G_q$, $\Delta_{\ms G}$ and the time
scales $(\beta_N, \beta^+_N)$ satisfy conditions {\rm (H0)--(H3)}.
\end{theorem}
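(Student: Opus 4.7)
The plan is to treat Theorem \ref{mt2} as a recursive application of the machinery behind Theorem \ref{mt1}, using Theorem \ref{s01} as a bridge. Applied at time scale $\beta_N$ to the partition $\{\ms F_x\}_{x\in P}$, Theorem \ref{s01} identifies $X_{\ms F}(t)$ as the limiting chain on $P$. The new partition $\{\ms G_a\}_{a\in Q}$ arises from the recurrent classes $G_a$ of $X_{\ms F}$, exactly mirroring the construction of $\{\ms E_x\}$ from the recurrent classes of $X_R$ in Section~\ref{sec06}. For each $a$, the restricted chain $X_{\ms F}|_{G_a}$ is irreducible and admits a unique invariant probability $M_a$ on $G_a$.

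For (H0), I would first prove that $\mu_N(\ms F_x)/\mu_N(\ms G_a) \to M_a(x) > 0$ for $x\in G_a$. The natural route is to consider the trace of $\eta^N_t$ on $\ms G_a$: its invariant measure restricted to $\ms F_x$ equals $\mu_N(\cdot\,|\,\ms G_a)$, and the rescaled rates of the coarsened trace converge to those of $X_{\ms F}|_{G_a}$ by old (H1). Uniqueness of the stationary distribution of an irreducible finite chain then pins down the limit as $M_a$. Combined with old (H0), this yields $\mu_N(\eta)/\mu_N(\ms G_a) \to M_a(x)\, m_x(\eta) > 0$ for $\eta\in\ms F_x\subset\ms G_a$, establishing (H0) for the new partition.

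For (H1), which I expect to be the main obstacle, I need $\lim_N \beta^+_N\, r^{\ms G}_N(\ms G_a,\ms G_b)\in\bb R_+$ with positive total. Positivity of the total sum is built into the definition \eqref{b05} of $\beta^+_N$ via the capacity identity
\begin{equation*}
\frac{\Cap_N(\ms G_a,\breve{\ms G}_a)}{\mu_N(\ms G_a)}
\;=\; \sum_{b\neq a} r^{\ms G}_N(\ms G_a,\ms G_b)\,
\bb P^{\ms G}_{\mu_N(\cdot|\ms G_a)}\!\big[H^{\ms G}_{\ms G_b} < H^{\ms G,+}_{\ms G_a}\big],
\end{equation*}
so only the individual convergence needs work. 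The plan is to compose traces: first trace $\eta^N_t$ on $\ms F$ and observe at scale $\beta_N$ to reduce to $X_{\ms F}(t)$ by Theorem \ref{s01}; then exploit the classical fact that, for the fixed chain $X_{\ms F}$ started in $G_a$ at its quasi-stationary distribution, the average exit rate to $G_b$ is a well-defined finite number $r_{\ms G}(a,b)$. The delicate point is a quantitative comparison showing that $\beta^+_N\, r^{\ms G}_N(\ms G_a,\ms G_b)$ is asymptotically the same as the corresponding rate for $X_{\ms F}$; this requires carefully tracking how the trace construction composes under partition refinement and how capacities transform — much as in the original proof of Theorem \ref{mt1}, but now with $(X_{\ms F},\beta_N)$ playing the role of $(\eta^N,\alpha_N)$.

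For (H2), fix $\eta\neq\xi\in\ms G_a$ with $|\ms G_a|\ge 2$. If $\eta,\xi\in\ms F_x$ with $|\ms F_x|\ge 2$, old (H2) at scale $\beta^-_N$, combined with $\beta_N\gg\beta^-_N$ and $\mu_N(\ms F_x)/\mu_N(\ms G_a)\to M_a(x)>0$, gives the needed lower bound at scale $\beta_N$. If $\eta\in\ms F_x$, $\xi\in\ms F_y$ with $x\neq y$ in $G_a$, irreducibility of $X_{\ms F}|_{G_a}$ produces a chain $x=x_0,\dots,x_k=y$ in $G_a$ with $r_{\ms F}(x_{i-1},x_i)>0$, and I bound $\Cap_N(\eta,\xi)$ from below by combining capacities of consecutive pairs via a standard resistance inequality, together with old (H2) within each $\ms F_{x_i}$. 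For (H3), split $\Delta_{\ms G} = \Delta_{\ms F}\sqcup\Delta_*$ with $\Delta_* = \bigcup_{x\in G_{\mf q+1}}\ms F_x$. The $\Delta_{\ms F}$ contribution at scale $\beta^+_N$ is controlled by the old (H3) at scale $\beta_N$ after a short integration step using the Markov property at times $k\beta_N$ with $k\le t\beta^+_N/\beta_N$. The $\Delta_*$ contribution is handled by Theorem \ref{s01} and Remark \ref{rm2}: the transient states of $X_{\ms F}$ are visited only finitely often in scale $\beta_N$, so the time spent there is $o(\beta_N)$, hence also $o(\beta^+_N)$.
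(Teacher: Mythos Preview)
Your plan for (H1) has a genuine gap. You propose to extract $r_{\ms G}(a,b)$ from the limiting chain $X_{\ms F}$ via ``the average exit rate to $G_b$'' of $X_{\ms F}$ started in $G_a$. But $G_a$ is a \emph{recurrent} class of $X_{\ms F}$: by construction $r_{\ms F}(x,y)=0$ for every $x\in G_a$ and $y\notin G_a$, so $X_{\ms F}$ never exits $G_a$ and the quantity you want to read off is identically zero. The new rates $r_{\ms G}(a,b)$ encode subleading information about $\eta^N_t$ that is annihilated in the $\beta_N$-limit; no amount of ``composing traces'' with $X_{\ms F}$ as the target can recover it. (Incidentally, your displayed capacity identity is also off: by \eqref{12} one has $\Cap_N(\ms G_a,\breve{\ms G}_a)/\mu_N(\ms G_a)=\sum_{b\neq a} r^{\ms G}_N(\ms G_a,\ms G_b)$ with no hitting-probability factor.)

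The paper avoids this obstruction by going back to Assumption~\ref{mhyp}: Assertion~\ref{as01} says that the jump rates $R^{\ms G}_N$ of the trace on $\ms G$ themselves satisfy the ordering hypothesis, so one can set $1/\gamma_N=\sum_{a}\sum_{\eta\in\ms G_a}\sum_{\xi\in\breve{\ms G}_a}R^{\ms G}_N(\eta,\xi)$ and obtain directly that $\gamma_N R^{\ms G}_N(\eta,\xi)\to r_{\ms G}(\eta,\xi)\in[0,1]$. Together with (H0), this gives the existence of $\lim_N \gamma_N\,\Cap_N(\ms G_a,\breve{\ms G}_a)/\mu_N(\ms G_a)$ and the positivity of their sum (Assertion~\ref{as12}), hence $\gamma_N/\beta^+_N\to\hat\lambda_{\ms G}\in(0,\infty)$ and Lemma~\ref{s07}. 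In short, (H1) is established from the microscopic rates via Assumption~\ref{mhyp}, not from the reduced chain $X_{\ms F}$. Your outlines for (H0), (H2) and (H3) are broadly on target, with two caveats: for (H2) in the non-reversible setting you will need the sector-condition comparison to the symmetric chain (Corollary~\ref{s10}, \eqref{24}) before invoking any ``resistance inequality''; and in (H3) the sentence ``time spent is $o(\beta_N)$, hence also $o(\beta^+_N)$'' is not a valid inference over the long window $[0,t\beta^+_N]$---you must also argue, as the paper does in Lemma~\ref{s16}, that once $\ms G$ is reached the chain does not return to $\Delta_*$ on scale $\beta_N$ (because the $G_a$ are absorbing for $X_{\ms F}$).
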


\begin{remark}
  It follows from Theorems \ref{s01} and \ref{mt2} that the chain
  $\eta^N_t$ exhibits a metastable behavior in the time-scale
  $\beta^+_N$ if $\mf q>1$. We refer to Remarks \ref{rm1a},
  \ref{rm1d}, \ref{rm1b} and \ref{rm1c}.
\end{remark}

\begin{remark}
\label{rm3.1}
As $\mf q<\mf p$ and as we need $\mf p$ to be greater than or equal to
$2$ to apply the iterative procedure, this recursive algorithm ends
after a finite number of steps.

If $\mf q=1$, $\beta_N$ is the longest time-scale at which a
metastable behavior is observed. In this time-scale, the chain
$\eta^N_t$ jumps among the sets $\ms F_x$ as does the chain $X_{\ms
  F}(t)$ until it reaches the set $\ms G_1 = \cup_{x\in G_1} \ms
F_x$. Once in this set, it remains there for ever jumping
among the sets $\ms F_x$, $x\in G_1$, as the Markov chain $X_{\ms
  F}(t)$, which restricted to $G_1$ is an irreducible Markov chain.
\end{remark}

\noindent{\bf The successive valleys}: Observe that the valleys $\ms
G_a$ were obtained as the recurrent classes of the Markov chain
$X_{\ms F}(t)$: $\ms G_a = \cup_{x\in G_a} \ms F_x$, where $G_a$ is a
recurrent class of $X_{\ms F}(t)$. In particular, at any time-scale
the valleys are formed by unions of the valleys obtained in the first
step of the recursive argument, which were denoted by $\ms E_x$ in the
previous subsection. Moreover, by (H0), each configuration in $\ms
G_a$ has measure of the same order.

\smallskip\noindent{\bf Conclusion}: We presented an iterative method
which provides a finite sequence of time-scales and of partitions of
the set $E$ satisfying conditions (H0)-(H3). At each step, the time
scales become longer and the partitions coarser. By Theorem \ref{s01},
to each pair of time-scale and partition corresponds a metastable
behavior of the chain $\eta^N_t$. This recursive algorithm provides
all time-scales at which a metastable behavior of the chain $\eta^N_t$
is observed, and all slow variables which keep a Markovian dynamics.

\section{What do we learn from Assumption \ref{mhyp}?}
\label{sec02}

We prove in this section that the jump rates of the trace processes
satisfy Assumption \ref{mhyp}, and that some sequences, such as the
one formed by the measures of the configurations, are ordered.

\begin{asser}
\label{as01}
Let $F$ be a proper subset of $E$ and denote by $R^F_N(\eta,\xi)$, $\eta\not =
\xi\in F$, the jump rates of the trace of $\eta^N_t$ on $F$. The jump
rates $R^F_N(\eta,\xi)$ satisfy Assumption \ref{mhyp}.
\end{asser}

\begin{proof}
We prove this assertion by removing one by one the elements of
$E\setminus F$. Assume that $F=E\setminus \{\zeta\}$ for some $\zeta\in E$. By
Corollary 6.2 in \cite{bl2} and by the equation following the proof of
this corollary, for $\eta\not = \xi\in F$, $R^F_N(\eta,\xi) =
R_N(\eta,\xi) + R_N(\eta,\zeta) p_N(\zeta,\xi)$. Hence, 
\begin{equation}
\label{18}
R^F_N(\eta,\xi) \;=\; \frac{ \sum_{w\in E} R_N(\eta,\xi)R_N(\zeta,w) + R_N(\eta,\zeta)
  R_N(\zeta,\xi)} {\sum_{w\in E} R_N(\zeta,w)}\;\cdot
\end{equation}
It is easy to check from this identity that Assumption \ref{mhyp}
holds for the jump rates $R^F_N$. It remains to proceed recursively to
complete the proof.
\end{proof}

\begin{lemma}
\label{as02}
The sequences $\{\mu_N(\eta) : N\ge 1\}$, $\eta\in E$, are ordered.
\end{lemma}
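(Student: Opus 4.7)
The plan is to combine the Markov-chain tree theorem with a general reduction principle for ordered sums. By the tree theorem,
$$ \mu_N(\eta) \;=\; \frac{W_N(\eta)}{\sum_{\zeta\in E} W_N(\zeta)}\;,\qquad W_N(\eta) \;=\; \sum_{T\in \mc T_\eta} \prod_{(\xi, \xi')\in T} R_N(\xi, \xi')\;, $$
where $\mc T_\eta$ is the set of spanning trees of $E$ whose edges are all oriented toward the root $\eta$. Each nonzero summand is a product of exactly $|E|-1$ jump rates all lying in $\bb B$, and is therefore an element of the family $\mf A_{|E|-1}$ of Assumption \ref{mhyp}. Consequently, the collection of all such summands, as $\eta$ ranges over $E$ and $T$ ranges over $\mc T_\eta$, forms an ordered family of strictly positive sequences.

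Since $\mu_N(\eta)/\mu_N(\xi) = W_N(\eta)/W_N(\xi)$, the lemma reduces to the following abstract fact: if $(a_N^r : r \in \mf R)$ is a finite ordered family of strictly positive sequences and $I, J \subset \mf R$ are nonempty, then $\arctan\{\sum_{r\in I} a_N^r / \sum_{r\in J} a_N^r\}$ converges. To prove it, I would declare $r \sim s$ when $\lim_N a_N^r/a_N^s$ is finite and positive; this is an equivalence relation whose classes are totally ordered by asymptotic dominance, and within each class $C$ one has $\sum_{r \in C} a_N^r \sim c_C \, a_N^{r_0}$ for any fixed representative $r_0 \in C$, with $c_C > 0$. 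Choosing representatives $r^\star \in I$ and $s^\star \in J$ of the maximal classes meeting $I$ and $J$, the subdominant classes contribute negligibly, so the target ratio is asymptotic to a positive constant times $a_N^{r^\star}/a_N^{s^\star}$, and convergence of its arctangent then follows directly from orderedness of the original family.

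Applying the reduction with $I = \mc T_\eta$ and $J = \mc T_\xi$ yields the claim. I expect no substantive obstacle: the only careful points are the equivalence-class bookkeeping in the abstract reduction and the observation that trees with an edge outside $\bb B$ contribute zero and can be discarded at the outset. Once these are in place, the lemma follows immediately from Assumption \ref{mhyp} through the combinatorial identity provided by the tree theorem.
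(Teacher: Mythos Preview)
Your argument is correct, but it takes a genuinely different route from the paper. The paper fixes $\eta\neq\xi$, passes to the trace of $\eta^N_t$ on the two-point set $F=\{\eta,\xi\}$, and uses detailed balance for this two-state chain to write $\mu_N(\eta)/\mu_N(\xi)=R^F_N(\xi,\eta)/R^F_N(\eta,\xi)$; orderedness then follows immediately from Assertion~\ref{as01}, which guarantees that trace rates again satisfy Assumption~\ref{mhyp}. Your approach instead invokes the Markov-chain tree theorem to express $\mu_N(\eta)/\mu_N(\xi)$ as a ratio of sums of $(|E|-1)$-fold products of jump rates, and then proves an abstract reduction lemma showing that ratios of finite sums drawn from an ordered family remain ordered. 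Both arguments are valid; the paper's is shorter because it outsources the combinatorics to Assertion~\ref{as01} (whose proof, via the recursive formula~\eqref{18}, implicitly relies on the same ``sums of ordered sequences are ordered'' principle you make explicit). Your version is more self-contained in that it bypasses Assertion~\ref{as01} entirely, at the cost of importing the tree theorem and spelling out the dominance argument for equivalence classes. One small point worth stating explicitly: irreducibility of $\eta^N_t$ ensures that for every $\eta$ there is at least one arborescence toward $\eta$ with all edges in $\bb B$, so the index sets $I=\mc T_\eta$ and $J=\mc T_\xi$ in your reduction are indeed nonempty.
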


\begin{proof}
Fix $\eta\not = \xi\in E$ and let $F=\{\eta,\xi\}$. By
\cite[Proposition 6.3]{bl2}, the stationary state of the trace of
$\eta^N_t$ on $F$, denoted by $\mu^F_N$, is given by $\mu^F_N(\eta)=
\mu_N(\eta)/\mu_N(F)$. As $\mu^F_N$ is the invariant probability
measure, $\mu^F_N(\eta) R^F_N(\eta,\xi) = \mu^F_N(\xi)
R^F_N(\xi,\eta)$. Therefore, $\mu_N(\eta)/\mu_N(\xi) =
\mu^F_N(\eta)/\mu^F_N(\xi) = R^F_N(\xi,\eta)/R^F_N(\eta,\xi)$.  By
Assertion \ref{as01}, the sequences $\{R^F_N(a,b) : N\ge 1\}$, $a\not
= b\in \{\eta,\xi\}$ are ordered. This completes the proof of the
lemma.
\end{proof}

The previous lemma permits to divide the configurations of $E$ into
equivalent classes by declaring $\eta$ equivalent to $\eta'$, $\eta
\sim \eta'$, if $\mu_N(\eta)/\mu_N(\eta')$ converges to a real number
belonging to $(0,\infty)$.

\begin{asser}
\label{as16}
Let $F$ be a proper subset of $E$. For every bond $(\eta',\xi')\in \bb
B$ and every $m\ge 1$ the set of sequences
\begin{equation*}
\Big\{ \prod_{(\eta,\xi)\in\bb B} R^F_N(\eta,\xi)^{k(\eta,\xi)} R_N(\eta',\xi') : N \ge 1\Big\}
\;,\quad k\in\mf A_m
\end{equation*}
is ordered. 
\end{asser}

\begin{proof}
We proceed as in the proof of Assertion \ref{as01}, by removing one by
one the elements of $E\setminus F$. Fix $\zeta\in E\setminus F$.
It follows from \eqref{18} and from Assumption \ref{mhyp} that the
claim of the assertion holds for $F'= E\setminus \{\zeta\}$.

Fix $\zeta'\in E\setminus F$, $\zeta'\not = \zeta$.  By using formula
\eqref{18}, to express the rates $R^{E \setminus \{\zeta, \zeta'\}}$
in terms of the rates $R^{E \setminus \{\zeta\}}$, and the statement
of this assertion for $F'= E\setminus \{\zeta\}$ we prove that this
assertion also holds for $F'= E\setminus \{\zeta, \zeta'\}$. Iterating
this algorithm we complete the proof of the assertion.
\end{proof}

Denote by $c_N(\eta,\xi) = \mu_N(\eta) R_N(\eta,\xi)$, $(\eta,\xi)\in
\bb B$, the (generally asymmetric) conductances.

\begin{lemma}
\label{s11}
The conductances $\{c_N(\eta,\xi) : N\ge 1\}$, $(\eta,\xi) \in \bb B$,
are ordered. 
\end{lemma}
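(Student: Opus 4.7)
The plan is to write $c_N(\eta,\xi) = \mu_N(\eta)\,R_N(\eta,\xi)$ as, up to a common factor, a sum of monomials of a fixed total degree in the rates $R_N$, and then to apply Assumption~\ref{mhyp} at that degree.

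First I would invoke the Markov chain tree theorem (Kirchhoff/Freidlin--Wentzell): $\mu_N(\eta) = w_N(\eta)/Z_N$, where
\[
w_N(\eta) \;=\; \sum_{T\in\mc T(\eta)} \prod_{(a,b)\in T} R_N(a,b)
\]
is a sum over spanning arborescences of $E$ rooted at $\eta$, each summand being a product of exactly $|E|-1$ rates, and $Z_N = \sum_\zeta w_N(\zeta)$. Consequently
\[
\frac{c_N(\eta,\xi)}{c_N(\eta',\xi')} \;=\; \frac{w_N(\eta)\,R_N(\eta,\xi)}{w_N(\eta')\,R_N(\eta',\xi')},
\]
and numerator and denominator are each finite sums of monomials lying in $\mf A_{|E|}$. (One could alternatively derive the same rational expression purely from the paper's tools, by combining the identity $\mu_N(\eta)/\mu_N(\eta') = R^F_N(\eta',\eta)/R^F_N(\eta,\eta')$ from the proof of Lemma~\ref{as02}, with $F = \{\eta,\eta'\}$, and iterating~\eqref{18} to expand the trace rates as rational functions of the $R_N$'s.)

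Next I would record a short auxiliary lemma: if $\{a^1_N\},\ldots,\{a^k_N\}$ are pairwise ordered positive sequences, they partition into equivalence classes (with ratios having limits in $(0,\infty)$) which are totally ordered by asymptotic growth, so any finite positive linear combination is asymptotic to a positive constant times a single maximal term. By Assumption~\ref{mhyp} with $m=|E|$, the degree-$|E|$ monomials in the $R_N$'s form exactly such a family. Applying the auxiliary lemma to the numerator and to the denominator above, the ratio $c_N(\eta,\xi)/c_N(\eta',\xi')$ is asymptotic to a positive constant times a ratio of two single monomials in $\mf A_{|E|}$; that ratio converges in $[0,\infty]$ by Assumption~\ref{mhyp} once more, which yields convergence of $\arctan\{c_N(\eta,\xi)/c_N(\eta',\xi')\}$.

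The main obstacle is precisely this auxiliary sum-of-ordered-sequences observation: without it, the naive factorization of the ratio as $[\mu_N(\eta)/\mu_N(\eta')]\cdot[R_N(\eta,\xi)/R_N(\eta',\xi')]$ leaves an indeterminate $0\cdot\infty$ whenever one factor vanishes and the other diverges. Forcing the numerator and denominator into polynomials of a common degree $|E|$ in the $R_N$'s via the tree formula (or the iterated trace expansion from~\eqref{18}) is exactly what removes this indeterminacy and lets Assumption~\ref{mhyp} apply unambiguously.
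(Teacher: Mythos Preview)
Your argument is correct. The auxiliary observation that a finite sum of pairwise ordered positive sequences is asymptotic to a positive constant times any maximal term is exactly what is needed, and once both numerator and denominator of $c_N(\eta,\xi)/c_N(\eta',\xi')$ are written as sums of degree-$|E|$ monomials in the $R_N$'s, Assumption~\ref{mhyp} at level $m=|E|$ finishes the job.

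The paper takes a different route. It writes, for $\eta\neq\eta'$ and $F=\{\eta,\eta'\}$,
\[
\frac{c_N(\eta,\xi)}{c_N(\eta',\xi')} \;=\; \frac{R^F_N(\eta',\eta)\,R_N(\eta,\xi)}{R^F_N(\eta,\eta')\,R_N(\eta',\xi')}
\]
via the identity $\mu_N(\eta)/\mu_N(\eta')=R^F_N(\eta',\eta)/R^F_N(\eta,\eta')$ from the proof of Lemma~\ref{as02}, and then invokes Assertion~\ref{as16}, which asserts that products of trace rates multiplied by a single original rate are ordered. That assertion is itself proved by iterating~\eqref{18}, so under the hood the paper is also reducing everything to rational functions in the $R_N$'s of controlled degree; it simply packages the ``sum of ordered monomials'' step inside Assertion~\ref{as16} rather than stating it separately. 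Your tree-theorem argument bypasses Assertion~\ref{as16} entirely and is arguably more transparent about why the indeterminate $0\cdot\infty$ disappears, at the cost of importing Kirchhoff's formula from outside the paper's internal toolkit. You in fact sketch the paper's approach in your parenthetical alternative, so you have identified both routes.
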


\begin{proof}
Consider two bonds $(\eta,\xi)$, $(\eta',\xi')$ in $\bb B$. As in the
proof of Lemma \ref{as02}, we may express the ratio of the
conductances as
\begin{equation*}
\frac{c_N(\eta,\xi)}{c_N(\eta',\xi')}\;=\;
\frac{\mu_N (\eta) R_N(\eta,\xi)}{\mu_N (\eta') R_N(\eta',\xi')}\;=\;
\frac{R^F_N (\eta',\eta) R_N(\eta,\xi)}{R^F_N (\eta,\eta')
  R_N(\eta',\xi')}\;, 
\end{equation*}
where $F=\{\eta,\eta')$. It remains to recall the statement of
assertion \ref{as16} to complete the proof of the lemma.
\end{proof}

Denote by $\bb B^s$ the symmetrization of the set $\bb B$, that is,
the set of bonds $(\eta,\xi)$ such that $(\eta,\xi)$ or $(\xi,\eta)$
belongs to $\bb B$:
\begin{equation*}
\bb B^s \;=\; \big \{(\eta,\xi) \in E\times E : \eta\not = \xi \,,\,
(\eta,\xi)\in \bb B \;\text{ or }\; (\xi, \eta)\in \bb B \big\}\;.
\end{equation*}
Denote by $c^s_N(\eta,\xi)$, $(\eta,\xi)\in \bb B^s$, the symmetric
part of the conductance:
\begin{equation}
\label{23}
c^s_N(\eta,\xi) \;=\; \frac 12 \big\{c_N(\eta,\xi) + c_N(\xi, \eta) \big\}\;.
\end{equation}
Next result is a straightforward consequence of the previous lemma. 

\begin{corollary}
\label{s12}
The symmetric conductances $\{c^s_N(\eta,\xi) : N\ge 1\}$, $(\eta,\xi) \in \bb B^s$,
are ordered. 
\end{corollary}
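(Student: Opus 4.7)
The plan is to derive Corollary \ref{s12} directly from Lemma \ref{s11}, using the identity $c^s_N(\eta,\xi) = (1/2)\{c_N(\eta,\xi) + c_N(\xi,\eta)\}$ together with the general fact that finite nonnegative sums of sequences drawn from an ordered family remain ordered, up to multiplicative constants.

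Fix two symmetric bonds $(\eta,\xi), (\eta',\xi') \in \bb B^s$. The two symmetric conductances $c^s_N(\eta,\xi)$ and $c^s_N(\eta',\xi')$ involve at most four asymmetric conductances. Those whose underlying ordered pair fails to lie in $\bb B$ vanish identically; the remaining ones belong to the family of Lemma \ref{s11}, so every pairwise ratio among them admits an arctan-limit in $[0,\pi/2]$. After discarding the zero terms, standard bookkeeping produces a reference index $i_0$ such that every ratio $a^{(k)}_N / a^{(i_0)}_N$ converges to a finite limit $\rho_k \in [0,\infty)$, with $\rho_{i_0} = 1$; explicitly, one takes $i_0$ to be an index whose sequence is maximal in the eventual magnitude ranking provided by the arctan-convergence of ratios.

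Dividing $c^s_N(\eta,\xi)$ and $c^s_N(\eta',\xi')$ by $a^{(i_0)}_N$ turns each symmetric conductance into a finite sum of terms each converging in $[0,\infty)$. The sum that contains $a^{(i_0)}_N$ has a limit bounded below by $1/2$, so the ratio $c^s_N(\eta,\xi)/c^s_N(\eta',\xi')$ admits a well-defined limit in $[0,\infty]$. Applying $\arctan$ to this ratio yields a convergent sequence in $[0,\pi/2]$, establishing that $\{c^s_N(\eta,\xi) : N \ge 1\}$, $(\eta,\xi) \in \bb B^s$, is an ordered family.

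There is no substantive difficulty here; the one case worth flagging is that in which the denominator's normalized limit equals $0$ while the numerator's is positive, giving limit $+\infty$ for the ratio of symmetric conductances. This is precisely the situation that the $\arctan$-based definition of \emph{ordered} was designed to accommodate, since $\arctan(x) \to \pi/2$ as $x \to \infty$.
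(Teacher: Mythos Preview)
Your proof is correct and takes essentially the same approach as the paper: the paper simply states that Corollary~\ref{s12} is ``a straightforward consequence of the previous lemma'' (Lemma~\ref{s11}), and what you have written is precisely the elementary bookkeeping that fills in that claim. Your choice of a maximal reference index $i_0$ among the at most four asymmetric conductances, and the case analysis for the limit of the ratio, are the natural way to make the implication explicit.
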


As in Lemma \ref{as02}, the previous corollary permits to divide the
set $\bb B^s$ into equivalent classes by declaring $(\eta,\xi)$
equivalent to $(\eta',\xi')$, $(\eta,\xi) \sim (\eta',\xi')$, if
$c^s_N(\eta,\xi)/c^s_N(\eta',\xi')$ converges to a constant in
$(0,\infty)$.

It is possible to deduce from Assumption \ref{mhyp} that many other
sequences are ordered. We do not present these results here as we do
not use them below.

\section {Cycles, sector condition and capacities} 
\label{sec00}

We prove in this section that the generator of a Markov chain on a
finite set can be decomposed as the sum of cycle generators and that
it satisfies a sector condition. This last bound permits to estimate
the capacity between two sets by the capacity between the same sets
for the reversible process.

Throughout this section, $E$ is a fixed finite set and $\mc L$
represents the generator of an $E$-valued, continuous-time Markov
chain.  We adopt all notation introduced in Section \ref{not}, removing
the index $N$ since the chain is fixed.  We start with some
definitions.

In a finite set, the decomposition of a generator into cycle
generators is very simple. The problem for infinite sets is much more
delicate. We refer to \cite{gv12} for a discussion of the question.

\smallskip\noindent\emph{Cycle}: A cycle is a sequence of distinct
configurations $(\eta_0, \eta_1, \dots, \eta_{n-1}, \eta_n=\eta_0)$
whose initial and final configuration coincide: $\eta_i \not =
\eta_j\in E$, $i\not = j\in \{0, \dots, n-1\}$. The number $n$ is
called the length of the cycle.

\smallskip\noindent\emph{Cycle generator}: A generator $\mc L$ of an
$E$-valued Markov chain, whose jump rates are denoted by
$R(\eta,\xi)$, is said to be a cycle generator associated to the cycle
$\mf c = (\eta_0, \eta_1, \dots, \eta_{n-1}, \eta_n=\eta_0)$ if there
exists reals $r_i>0$, $0\le i<n$, such that
\begin{equation*}
R(\eta,\xi) \;=\; 
\begin{cases}
r_i & \text{if $\eta=\eta_i$ and $\xi=\eta_{i+1}$ for some $0\le i
  <n$}\;, \\
0 & \text{otherwise}\;.
\end{cases}
\end{equation*}
We denote this cycle generator by $\mc L_{\mf c}$. Note that
\begin{equation*}
(\mc L_{\mf c} f)(\eta) \;=\; \sum_{i=0}^{n-1} \mb 1\{\eta =
\eta_i\}\, r_i \, [f(\eta_{i+1}) - f(\eta_i)]\;.
\end{equation*}

\smallskip\noindent\emph{Sector condition}: A generator $\mc L$ of an
$E$-valued, irreducible Markov chain, whose unique invariant
probability measure is denoted by $\mu$, is said to satisfy a sector
condition if there exists a constant $C_0<\infty$ such that for all
functions $f$, $g:E\to \bb R$,
\begin{equation*}
\< \mc Lf, g\>^2_\mu \;\le\; C_0 \< (-\mc L f), f\>_\mu\, \< (-\mc L g), g\>_\mu\;. 
\end{equation*}
In this formula, $\< f, g\>_\mu$ represents the scalar product in
$L^2(\mu)$: 
\begin{equation*}
\< f, g\>_\mu \;=\; \sum_{\eta\in E} f(\eta)\, g(\eta) \,
\mu(\eta)\;. 
\end{equation*}

We claim that every cycle generator satisfies a sector condition and
that every generator $\mc L$ of an $E$-valued Markov chain, stationary with
respect to a probability measure $\mu$, can be decomposed as the sum
of cycle generators which are stationary with respect to $\mu$.

\begin{asser}
\label{as14}
Consider a cycle $\mf c = (\eta_0, \eta_1, \dots, \eta_{n-1},
\eta_n=\eta_0)$ of length $n\ge 2$ and let $\mc L$ be a cycle generator
associated to $\mf c$. Denote the jump rates of $\mc L$ by
$R(\eta_i,\eta_{i+1})$. A measure $\mu$ is stationary for $\mc L$ if and
only if 
\begin{equation}
\label{03}
\mu(\eta_i) \, R(\eta_i,\eta_{i+1}) \;\; \text{is constant}\;.  
\end{equation}
\end{asser}

The proof of the previous assertion is elementary and left to the
reader. The proof of the next one can be found in \cite[Lemma
5.5.8]{klo12}.

\begin{asser}
\label{as15}
Let $\mc L$ be a cycle generator associated to a cycle $\mf c$ of length
$n$. Then, $\mc L$ satisfies a sector condition with constant $2n$: For all
$f$, $g:E\to \bb R$,
\begin{equation*}
\< \mc Lf, g\>^2_\mu \;\le\; 2n \, \< (-\mc L f), f\>_\mu\, \< (-\mc L g), g\>_\mu\;. 
\end{equation*}
\end{asser}

\begin{lemma}
\label{s03}
Let $\mc L$ be a generator of an $E$-valued, irreducible Markov chain.
Denote by $\mu$ the unique invariant probability measure.  Then, there
exists cycles $\mf c_1, \dots, \mf c_p$ such that
\begin{equation*}
\mc L \;=\; \sum_{j=1}^p \mc L_{\mf c_j}\;,
\end{equation*}
where $\mc L_{\mf c_j}$ are cycle generators associated to $\mf c_j$ which
are stationary with respect to $\mu$.
\end{lemma}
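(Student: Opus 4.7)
The natural approach is to encode the generator as a weighted directed graph and decompose the stationary flow into elementary cycles. Define the conductances $c(\eta,\xi)=\mu(\eta)R(\eta,\xi)$ on $\mathbb{B}$. Stationarity of $\mu$ is equivalent to Kirchhoff's node law
\[
\sum_{\xi}c(\eta,\xi)\;=\;\sum_{\xi}c(\xi,\eta)\qquad\text{for every }\eta\in E,
\]
so $c$ is a nonnegative, divergence-free flow on the directed graph $(E,\mathbb{B})$. The plan is to peel off one cycle at a time from this flow, associate to each such cycle a generator via Assertion \ref{as14}, and observe that the construction reassembles $\mathcal{L}$.

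Concretely, I would argue by induction on the number of bonds on which $c$ is strictly positive. If no bond carries flow, then $\mathcal{L}=0$ and the statement is vacuous. Otherwise, fix an edge $(\eta_0,\eta_1)$ with $c(\eta_0,\eta_1)>0$. By the node law at $\eta_1$, some $(\eta_1,\eta_2)$ has positive flow. Continue greedily: at every visited vertex, the incoming flow is positive hence the outgoing flow is positive, so the walk can always be extended. Since $E$ is finite, some vertex eventually repeats; cutting the walk at the first repetition produces a cycle $\mathfrak{c}=(\xi_0,\xi_1,\ldots,\xi_{n-1},\xi_n=\xi_0)$ of distinct configurations with $c(\xi_i,\xi_{i+1})>0$ for all $i$. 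Let $r=\min_i c(\xi_i,\xi_{i+1})>0$ and define the cycle generator $\mathcal{L}_{\mathfrak{c}}$ with rates
\[
R_{\mathfrak{c}}(\xi_i,\xi_{i+1})\;=\;\frac{r}{\mu(\xi_i)}\;,
\]
and zero elsewhere. By Assertion \ref{as14}, $\mu$ is stationary for $\mathcal{L}_{\mathfrak{c}}$ since $\mu(\xi_i)R_{\mathfrak{c}}(\xi_i,\xi_{i+1})=r$ is constant along the cycle.

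Now subtract: the new conductance $c'(\eta,\xi):=c(\eta,\xi)-r\sum_i\mathbf{1}\{(\eta,\xi)=(\xi_i,\xi_{i+1})\}$ is again nonnegative and divergence-free (we subtracted $r$ from both the out-sum and the in-sum at each vertex of $\mathfrak{c}$), and by the choice of $r$ it vanishes on at least one more edge than $c$ did. The induction hypothesis then yields a decomposition $\mathcal{L}'=\sum_{j\geq 2}\mathcal{L}_{\mathfrak{c}_j}$ of the generator $\mathcal{L}'$ associated to $c'$, where each $\mathcal{L}_{\mathfrak{c}_j}$ is a $\mu$-stationary cycle generator. Setting $\mathfrak{c}_1=\mathfrak{c}$ and summing over edges one checks
\[
\sum_{j=1}^{p}\mu(\eta)R_{\mathfrak{c}_j}(\eta,\xi)\;=\;c(\eta,\xi)\;=\;\mu(\eta)R(\eta,\xi),
\]
so dividing by $\mu(\eta)>0$ gives $\sum_j\mathcal{L}_{\mathfrak{c}_j}=\mathcal{L}$, as required.

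The only nontrivial point is the cycle-extraction step, and in particular the verification that divergence-freeness is preserved after subtracting a cycle flow; this is immediate from the fact that each vertex of $\mathfrak{c}$ loses exactly $r$ units of outgoing and $r$ units of incoming flow. Irreducibility of $\mathcal{L}$ is not actually used beyond guaranteeing that $\mu(\eta)>0$ for every $\eta\in E$, which legitimates dividing by $\mu(\eta)$ in the last display.
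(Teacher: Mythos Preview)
Your argument is correct and rests on the same idea as the paper's: stationarity of $\mu$ turns the conductances $c(\eta,\xi)=\mu(\eta)R(\eta,\xi)$ into a nonnegative divergence-free flow, and one peels off cycle flows until nothing remains. The difference is purely organizational. You find an arbitrary cycle by a greedy walk in the support of the flow and subtract it. The paper instead proceeds by cycle length: it first strips all $2$-cycles $(\eta,\xi,\eta)$ with $R(\eta,\xi)R(\xi,\eta)>0$ using the minimal conductance $\min\{c(\eta,\xi),c(\xi,\eta)\}$, then all $3$-cycles, and so on; whatever survives after removing all $k$-cycles with $k<|E|$ is either zero or a single Hamiltonian cycle. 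Your route is a bit more direct and avoids that last observation. The paper's ordering, on the other hand, makes Remark~\ref{s08} immediate: a generator is reversible with respect to $\mu$ exactly when the decomposition uses only $2$-cycles, since the $2$-cycle stage already exhausts the symmetric part of the conductances.
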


\begin{proof}
The proof consists in eliminating successively all $2$-cycles (cycles
of length $2$), then all $3$-cycles and so on up to the $|E|$-cycle if
there is one left. Denote by $R(\eta,\xi)$ the jump rates of the
generator $\mc L$ and by $\bb C_2$ the set of all $2$-cycles
$(\eta,\xi, \eta)$ such that $R(\eta,\xi) R(\xi,\eta)>0$. Note that
the cycle $(\eta,\xi,\eta)$ coincide with the cycle $(\xi,\eta,\xi)$.

Fix a cycle $\mf c = (\eta,\xi,\eta)\in\bb C_2$. Let $\bar c(\eta,\xi) =
\min\{ \mu(\eta) R(\eta,\xi) , \mu(\xi) R(\xi,\eta)\}$ be the minimal
conductance of the edge $(\eta,\xi)$, and let $R_{\mf c} (\eta,\xi)$
be the jump rates given by $R_{\mf c} (\eta,\xi) =
\bar c(\eta,\xi)/\mu(\eta)$, $R_{\mf c} (\xi,\eta) =
\bar c(\eta,\xi)/\mu(\xi)$. Observe that $R_{\mf c} (\zeta,\zeta') \le R
(\zeta,\zeta')$ for all $(\zeta,\zeta')$, and that $R_{\mf c} (\xi,\eta) = R
(\xi,\eta)$ or $R_{\mf c} (\eta,\xi) = R (\eta,\xi)$.

Denote by $\mc L_{\mf c}$ the generator associated the the jump rates
$R_{\mf c}$. Since $\mu(\eta) R_{\mf c} (\eta,\xi) = \bar c(\eta,\xi) =
\mu(\xi) R_{\mf c} (\xi,\eta)$, by \eqref{03}, $\mu$ is a stationary
state for $\mc L_{\mf c}$ (actually, reversble). Let $\mc L_1 = \mc L - \mc L_{\mf
  c}$ so that
\begin{equation*}
\mc L \;=\; \mc L_1 \;+\; \mc L_{\mf c}\;.
\end{equation*}
As $R_{\mf c} (\zeta,\zeta') \le R (\zeta,\zeta')$, $\mc L_1$ is the
generator of a Markov chain. Since both $\mc L$ and $\mc L_{\mf c}$ are
stationary for $\mu$, so is $\mc L_1$. Finally, if we draw an arrow
from $\zeta$ to $\zeta'$ if the jump rate from $\zeta$ to $\zeta'$ is
strictly positive, the number of arrows for the generator $\mc L_1$ is
equal to the number of arrows for the generator $\mc L$ minus $1$ or $2$.
This procedure has therefore strictly decreased the number of arrows
of $\mc L$.

We may repeat the previous algorithm to $\mc L_1$ to remove from $\mc L$ all
$2$-cycles $(\eta,\xi, \eta)$ such that $R(\eta,\xi) R(\xi,\eta)>0$.
Once this has been accomplished, we may remove all $3$-cycles
$(\eta_0,\eta_1, \eta_2, \eta_3=\eta_0)$ such that $\prod_{0\le i <3}
R(\eta_i,\eta_{i+1}) >0$. At each step at least one arrow is removed
from the generator which implies that after a finite number of steps
all $3$-cycles are removed.

Once all $k$-cycles have been removed, $2\le k<|E|$, we have obtained
a decomposition of $\mc L$ as
\begin{equation*}
\mc L \;=\; \sum_{k=2}^{|E|-1} \mc L_k \;+\; \hat {\mc L}\;,
\end{equation*}
where $\mc L_k$ is the sum of $k$-cycle generators and is stationary
with respect to $\mu$, and $\hat {\mc L}$ is a generator, stationary with
respect to $\mu$, and with no $k$-cycles, $2\le k<|E|$. If $\hat {\mc L}$ has
an arrow, as it is stationary with respect to $\mu$ and has no
$k$-cycles, $\hat {\mc L}$ must be an $|E|$-cycle generator, providing the
decomposition stated in the lemma.
\end{proof}

\begin{remark}
\label{s08}
Observe that a generator $\mc L$ is reversible with respect to $\mu$ if
and only if it has a decomposition in $2$-cycles. Given a measure
$\mu$ on a finite state space, for example the Gibbs measure
associated to a Hamiltonian at a fixed temperature, by introducing
$k$-cycles satisfying \eqref{03} it is possible to define
non-reversible dynamics which are stationary with respect to
$\mu$. The previous lemma asserts that this is the only way to define
such dynamics.
\end{remark}

\begin{corollary}
\label{s09}
The generator $\mc L$ satisfies a sector condition with constant bounded
by $2|E|$: For all
$f$, $g:E\to \bb R$,
\begin{equation*}
\< \mc Lf, g\>^2_\mu \;\le\; 2|E| \, \< (-\mc L f), f\>_\mu\, \< (-\mc L g), g\>_\mu\;. 
\end{equation*} 
\end{corollary}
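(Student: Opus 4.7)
The plan is to combine the cycle decomposition of Lemma \ref{s03} with the per-cycle sector bound of Assertion \ref{as15} via the triangle inequality and Cauchy--Schwarz.

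First, I would invoke Lemma \ref{s03} to write $\mc L = \sum_{j=1}^p \mc L_{\mf c_j}$, where each $\mc L_{\mf c_j}$ is a cycle generator associated to a cycle $\mf c_j$ of some length $n_j$ and is stationary with respect to $\mu$. Since a cycle is a sequence of \emph{distinct} configurations of $E$, we have $n_j \le |E|$ for every $j$. Then Assertion \ref{as15} applied to each $\mc L_{\mf c_j}$ gives the per-cycle bound
\begin{equation*}
\bigl|\< \mc L_{\mf c_j} f, g\>_\mu\bigr| \;\le\; \sqrt{2 n_j}\,
\sqrt{\< (-\mc L_{\mf c_j}) f, f\>_\mu}\; \sqrt{\< (-\mc L_{\mf c_j}) g, g\>_\mu}
\;\le\; \sqrt{2|E|}\, \sqrt{\< (-\mc L_{\mf c_j}) f, f\>_\mu}\; \sqrt{\< (-\mc L_{\mf c_j}) g, g\>_\mu}\;.
\end{equation*}

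Next I would sum over $j$, using $|\<\mc L f, g\>_\mu| \le \sum_j |\<\mc L_{\mf c_j} f, g\>_\mu|$, and then apply the discrete Cauchy--Schwarz inequality to the resulting sum of products of square roots. This yields
\begin{equation*}
\bigl|\< \mc L f, g\>_\mu\bigr| \;\le\; \sqrt{2|E|}\,
\Bigl(\sum_{j=1}^p \< (-\mc L_{\mf c_j}) f, f\>_\mu \Bigr)^{1/2}
\Bigl(\sum_{j=1}^p \< (-\mc L_{\mf c_j}) g, g\>_\mu \Bigr)^{1/2}\;.
\end{equation*}

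Finally, by linearity of $h\mapsto\<(-\mc L_{\mf c_j})h, h\>_\mu$ in the generator, the identity $\mc L = \sum_j \mc L_{\mf c_j}$ gives $\sum_{j=1}^p \<(-\mc L_{\mf c_j}) f, f\>_\mu = \<(-\mc L)f, f\>_\mu$, and similarly for $g$. Squaring the previous display then produces the stated sector condition with constant $2|E|$. The only conceptual step is the decomposition of Lemma \ref{s03}; once it is in hand, the rest is a textbook Cauchy--Schwarz argument, so I do not anticipate any real obstacle here. One small point worth verifying along the way is that each summand $\<(-\mc L_{\mf c_j}) f, f\>_\mu$ is nonnegative (so that taking square roots is legitimate), which follows from the fact that $\mc L_{\mf c_j}$ is the generator of a Markov chain stationary with respect to $\mu$.
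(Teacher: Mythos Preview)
Your proposal is correct and follows essentially the same route as the paper: invoke the cycle decomposition of Lemma~\ref{s03}, apply the per-cycle sector bound of Assertion~\ref{as15} together with $n_j\le |E|$, and combine via Cauchy--Schwarz and the additivity of the Dirichlet forms. Your extra remark on the nonnegativity of each $\<(-\mc L_{\mf c_j})f,f\>_\mu$ is a welcome clarification but not a departure from the paper's argument.
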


\begin{proof}
Fix $f$ and $g:E\to \bb R$. By Lemma \ref{s03},
\begin{equation*}
\< \mc Lf, g\>^2_\mu \;=\; \Big( \sum_{j=1}^p \< \mc L_{\mf c_j} f, g\>_\mu
\Big)^2  \;,
\end{equation*}
where $\mc L_{\mf c_j}$ is a cycle generator, stationary with respect to
$\mu$, associated to the cycle $\mf c_j$. By Assertion \ref{as15} and by
Schwarz inequality, since all cycles have length at most $|E|$, the
previous sum is bounded by
\begin{equation*}
2|E| \, \sum_{j=1}^p \< (-\mc L_{\mf c_j} f), f\>_\mu 
\, \sum_{k=1}^p \< (-\mc L_{\mf c_k} g), g\>_\mu \;=\; 
2|E| \, \< (-\mc L f), f\>_\mu \, \< (-\mc L g), g\>_\mu\;,
\end{equation*}
as claimed
\end{proof}

Denote by $R^s(\eta,\xi)$ the symmetric part of the jump rates
$R^s(\eta,\xi)$:
\begin{equation}
\label{30}
R^s(\eta,\xi) \;=\; \frac 12 \Big\{ R(\eta,\xi) \;+\; 
\frac{\mu(\xi)}{\mu (\eta)} \, R (\xi,\eta) \Big\}\;.
\end{equation}
Denote by $\eta^s_t$ the $E$-valued Markov chain whose jump rates are
given by $R^s$. The chain $\eta^s_t$ is called the reversible chain.

For two disjoint subsets $A$, $B$ of $E$, denote by $\Cap(A,B)$
(resp. $\Cap^s(A,B)$) the capacity between $A$ and $B$ (for the
reversible chain). Next result follows from Corollary \ref{s09} and
Lemmas 2.5 and 2.6 in \cite{gl14}

\begin{corollary}
\label{s10}
Fix two disjoint subsets $A$, $B$ of $E$. Then,
\begin{equation*}
\Cap^s(A,B) \;\le\; \Cap(A,B) \;\le\; 2|E| \, \Cap^s(A,B)\;.
\end{equation*}
\end{corollary}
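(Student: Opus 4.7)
The plan is to realize both capacities as values of the symmetric Dirichlet form
$$ D^s(f) \;=\; \frac 12 \sum_{(\eta,\xi)} c^s(\eta,\xi)\, [f(\xi)-f(\eta)]^2 $$
evaluated at the two equilibrium potentials, and then use the sector condition from Corollary \ref{s09} to compare these values. Let $h$ denote the equilibrium potential associated with $\mc L$, i.e.\ the unique solution of $\mc L h = 0$ on $(A\cup B)^c$ with $h|_A = 1$, $h|_B = 0$, and let $h^s$ denote the analogous equilibrium potential for $\mc L^s$.

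First, I would verify the identity $\langle(-\mc L)f, f\rangle_\mu = D^s(f)$ for all $f:E\to\bb R$. This follows from the stationarity of $\mu$ for $\mc L$ together with the symmetrization trick: writing $\langle(-\mc L)f,f\rangle_\mu = \sum_{\eta,\xi} c(\eta,\xi) f(\eta)[f(\eta)-f(\xi)]$ and averaging with the swap $\eta\leftrightarrow\xi$, the antisymmetric part $c(\eta,\xi)-c(\xi,\eta)$ contributes nothing to the diagonal terms (by stationarity $\sum_\xi[c(\eta,\xi)-c(\xi,\eta)]=0$) and contributes a sum of the form $\sum[c(\eta,\xi)-c(\xi,\eta)]f(\eta)f(\xi)$ that is antisymmetric in $(\eta,\xi)$ and thus vanishes. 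Consequently $D(f)=D^s(f)$.

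Next I would record the representation $\Cap(A,B) = \langle (-\mc L) h, h\rangle_\mu = D^s(h)$, obtained by writing out $\langle(-\mc L)h,h\rangle_\mu$, using harmonicity of $h$ on $(A\cup B)^c$, $h=1$ on $A$, $h=0$ on $B$, and the elementary identity $-\mc L h(\eta)=\lambda(\eta)\bb P_\eta[H_B<H^+_A]$ for $\eta\in A$. The same computation for the reversible chain gives $\Cap^s(A,B)=D^s(h^s)$, which by the Dirichlet principle equals $\inf\{D^s(f) : f|_A=1,\, f|_B=0\}$. Plugging $f=h$ into this variational principle immediately yields the lower bound $\Cap^s(A,B)\le D^s(h)=\Cap(A,B)$.

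For the upper bound, the idea is to test the sector condition of Corollary \ref{s09} with $f=h$ and $g=h^s$. A computation analogous to the one for $\Cap(A,B)$, using that $\mc L h$ is supported on $A\cup B$ and that $h^s=\mb 1_A$ on $A\cup B$, gives $\langle \mc L h, h^s\rangle_\mu = -\Cap(A,B)$. The sector condition therefore yields
\begin{equation*}
\Cap(A,B)^2 \;=\; \langle \mc L h, h^s\rangle_\mu^2 \;\le\; 2|E|\,\langle(-\mc L)h,h\rangle_\mu\,\langle(-\mc L)h^s,h^s\rangle_\mu \;=\; 2|E|\,\Cap(A,B)\,\Cap^s(A,B),
\end{equation*}
and dividing by $\Cap(A,B)$ gives $\Cap(A,B)\le 2|E|\,\Cap^s(A,B)$. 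The only delicate step is identifying $\langle \mc L h,h^s\rangle_\mu$ as $-\Cap(A,B)$ (and the parallel identification of $\langle(-\mc L)h,h\rangle_\mu$ as $\Cap(A,B)$); both rest on the boundary values of $h,h^s$ and on the fact that $\mc L h$ vanishes off $A\cup B$, so none of these steps is more than an exchange of sums. The heart of the argument is therefore simply Corollary \ref{s09}, exactly as the statement indicates.
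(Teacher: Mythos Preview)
Your proof is correct and is essentially the content of Lemmas~2.5 and~2.6 of \cite{gl14} that the paper invokes: one shows $\Cap(A,B)=D^s(h)$, uses the reversible Dirichlet principle for the lower bound, and tests the sector inequality of Corollary~\ref{s09} with the two equilibrium potentials $h$ and $h^s$ for the upper bound. So the approach coincides with the paper's; you have simply written out what is hidden behind the citation.
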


We conclude the section with an identity and an inequality which will
be used several times in this article.  Let $A$ and $B$ be two
disjoint subsets of $E$. By definition of the capacity
\begin{equation*}
\Cap (A, B)
\; =\; \sum_{\eta\in A} 
\mu(\eta)\, \lambda(\eta)\, \bb P_\eta \big[ H_{B} < H^+_{A} \big]
\; =\; \sum_{\eta\in A} 
\mu(\eta)\, \lambda(\eta)\, \sum_{\xi\in B}
\bb P_\eta \big[ H_{\xi} = H^+_{A\cup B} \big] \;.  
\end{equation*} 
Therefore, if we denote by $R^{A\cup B} (\eta,\xi)$, $\eta\not
=\xi\in A\cup B$, the jump rates of the trace of the chain $\eta_t$
on the set $A\cup B$, by \cite[Proposition 6.1]{bl2},
\begin{equation}
\label{12}
\Cap (A,B) \;=\; \sum_{\eta\in A} \mu(\eta) \sum_{\xi\in B}
R^{A\cup B} (\eta, \xi) \;.  
\end{equation} 

Let $A$ be a non-empty subset of $E$ and denote by $R^A(\eta,\xi)$ the
jump rates of the trace of $\eta_t$ on $A$.  We claim that for all
$\eta \not = \xi\in A$,
\begin{equation}
\label{19}
\mu(\eta)\, R^A(\eta,\xi) \;\le\; \Cap (\eta,\xi) \;.  
\end{equation}

Denote by $\lambda^A(\zeta)$ the holding rates of the trace process on
$A$ and by $p^A(\zeta,\zeta')$ the jump probabilities.  By definition,
\begin{equation*}
R^A(\eta,\xi) \;=\; \lambda^A(\eta)\, p^A(\eta,\xi)
\;=\; \lambda^A(\eta)\, \bb P_\eta[H_\xi = H^+_{A}] \;\le\;
\lambda^A(\eta)\, \bb P_\eta[H_\xi < H^+_{\eta}]\;.
\end{equation*}
Multiplying both sides of this inequality by
$\mu_A(\eta)=\mu(\eta)/\mu(A)$, by definition of the capacity we
obtain that
\begin{equation*}
\mu_A(\eta) \, R^A(\eta,\xi) \;\le \; \Cap_A(\eta,\xi)\;,
\end{equation*}
where $\Cap_A(\eta,\xi)$ stands for the capacity with respect to the
trace process on $A$. To complete the proof of \eqref{19}, it remains
to recall formula (A.10) in \cite{bl7}.

\section{Reversible chains  and capacities}
\label{sec03}

We present in this section some estimates for the capacity of
reversible, finite state Markov chains obtained in in
\cite{bl4}. There are useful below since se proved in Corollary
\ref{s10} that the capacity between two disjoint subsets $\ms A$, $\ms
B$ of $E$ is of the same order as the capacity with respect to the
reversible chain.

Recall from \eqref{23} that we denote by $c^s_N(\eta,\xi)$ the
symmetric conductance of the bond $(\eta,\xi)$.  Fix two disjoint
subsets $\ms A$, $\ms B$ of $E$.  A self-avoiding path $\gamma$ from
$\ms A$ to $\ms B$ is a sequence of configurations $(\eta_0, \eta_1,
\dots, \eta_n)$ such that $\eta_0\in \ms A$, $\eta_n\in \ms B$,
$\eta_i \not = \eta_j$, $i\not =j$, $c^s_N(\eta_i,\eta_{i+1})>0$,
$0\le i <n$. Denote by $\Gamma_{\ms A,\ms B}$ the set of self-avoiding
paths from $\ms A$ to $\ms B$ and let
\begin{equation}
\label{29}
\bs c^s_N(\gamma) \; =\; \min_{0\le i<n} c^s_N(\eta_i,\eta_{i+1})
\;, \quad
\bs c^s_N(\ms A,\ms B) \; =\; \max_{\gamma\in \Gamma_{\ms A,\ms B}} \bs c^s_N(\gamma) \;.
\end{equation}
For two configurations $\eta$, $\xi$, we represent $\bs
c^s_N(\{\eta\},\{\xi\})$ by $\bs c^s_N(\eta,\xi)$.  Note that $\bs
c^s_N(\eta,\xi) \le c^s_N(\eta,\xi) $, with possibly a strict inequality.

Fix two disjoint subsets $\ms A$, $\ms B$ of $E$ and a configuration
$\eta\not\in \ms A\cup \ms B$. We claim that
\begin{equation}
\label{25}
\bs c^s_N(\ms A,\ms B) \;\ge\;  \min \{ \bs c^s_N(\ms A,\eta) \,,\, \bs c^s_N(\eta,\ms B) \}\;.
\end{equation}
Indeed, there exist a self-avoiding path $\gamma_1$ from $\ms A$ to
$\eta$, and a self-avoiding path $\gamma_2$ from $\eta$ to $\ms B$ such
that $\bs c^s_N(\ms A,\eta) = \bs c^s_N(\gamma_1)$, $\bs c^s_N(\eta,\ms B) = \bs
c^s_N(\gamma_2)$. Juxtaposing the paths $\gamma_1$ and $\gamma_2$, we
obtain a path $\gamma$ from $\ms A$ to $\ms B$. Of course, the path $\gamma$
may not be self-avoiding, may return to $\ms A$ before reaching $\ms B$, or
may reach $\ms B$ before hitting $\eta$. In any case, we may obtain from
$\gamma$ a subpath $\hat\gamma$ which is self-avoiding and which
connects $\ms A$ to $\ms B$. Subpath in the sense that all bonds
$(\eta_i,\eta_{i+1})$ which appear in $\hat \gamma$ also appear in
$\gamma$. In particular, 
\begin{equation*}
\bs c^s_N(\hat \gamma) \;\ge\; \bs c^s_N(\gamma) \;=\; 
\min\{\bs c^s_N(\gamma_1) \,,\, \bs c^s_N(\gamma_2)\}
\;=\; \min \{ \bs c^s_N(\ms A,\eta) \,,\, \bs c^s_N(\eta,\ms B) \}
\;.
\end{equation*}
To complete the proof of claim \eqref{25}, it remains to observe that
$\bs c^s_N(\ms A,\ms B) \ge \bs c^s_N(\hat\gamma)$.

Fix two disjoint subsets $\ms A$, $\ms B$ of $E$ and configurations
$\eta_i\not\in \ms A\cup \ms B$, $1\le i\le n$, such that $\eta_i \not =
\eta_j$, $i\not = j$. Iterating inequality \eqref{25} we obtain that 
\begin{equation}
\label{26}
\bs c^s_N(\ms A,\ms B) \;\ge\;  \min \{ \bs c^s_N(\ms A,\eta_1) \,,\, 
\bs c^s_N(\eta_1,\eta_2) \,,\, \dots \,,\,  \bs c^s_N(\eta_{n-1},\eta_n) \,,\,
\bs c^s_N(\eta_n,\ms B) \}\;.  
\end{equation}

We conclude this section relating the symmetric capacity between two
sets $\ms A$, $\ms B$ of $E$ to the symmetric conductances $\bs
c^s_N(\ms A,\ms B)$. By Corollary \ref{s12}, the sequences of
symmetric conductances $\{c^s_N(\eta,\xi):N\ge 1\}$, $(\eta,\xi)\in\bb
B^s$, are ordered.  It follows from this fact and from the proof of
Lemmas 4.1 in \cite{bl4} that there exists constants
$0<c_0<C_0<\infty$ such that
\begin{equation}
\label{24}
c_0 \;<\; \liminf_{N\to\infty} \frac{\Cap^s_N(\ms A,\ms B)}
{\bs c^s_N(\ms A,\ms B)}\; \le\; \limsup_{N\to\infty} \frac{\Cap^s_N(\ms A,\ms B)}
{\bs c^s_N(\ms A,\ms B)}\;\le\; C_0\;. 
\end{equation}

\section{Proof of Theorem \ref{s01}}
\label{sec05}

In view of Theorem 5.1 in \cite{l-soft}, Theorem \ref{s01} follows
from from condition (H3) and from Propositions \ref{s04} below. Denote
by $\psi_{\ms E} : \ms E\to \{1, \dots, \mf n\}$ the projection
defined by $\psi_{\ms E}(\eta) = x$ if $\eta\in \ms E_x$:
\begin{equation*}
\psi_{\ms E}(\eta) \;=\; \sum_{x\in S} x \, \mb 1\{\eta \in \ms E_x\} \;.
\end{equation*}

\begin{proposition}
\label{s04}
Fix $x\in S$ and a configuration $\eta\in\ms E_x$. Starting from
$\eta$, the speeded-up, hidden Markov chain $X_N(t) = \psi_{\ms
  E}\big(\eta^{\ms E}(\theta_N t)\big)$ converges in the Skorohod
topology to the continuous-time Markov chain $X_{\ms E}(t)$, introduced
in Theorem \ref{s01}, which starts from $x$.
\end{proposition}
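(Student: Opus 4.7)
The plan is to apply a tunneling convergence theorem for finite-state Markov chains in the style of Beltr\'an--Landim (\cite{bl2}, \cite{bl4}) to the trace process $\eta^{\ms E}_t$. Such theorems assert that, given (H1) and a suitable fast-equilibration condition within each valley, the projected speeded-up chain $\psi_{\ms E}(\eta^{\ms E}(\theta_N t))$ converges in the Skorohod topology to the Markov chain on $S$ with rates $r_{\ms E}(x,y)$. My proof amounts to verifying the hypotheses of such a theorem.

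\textbf{Step 1 (limit generator).} Hypothesis (H1) provides the candidate limit: the $S$-valued Markov chain with jump rates $r_{\ms E}(x,y) = \lim_N \theta_N \, r^{\ms E}_N(\ms E_x,\ms E_y)$. Since $S$ is finite, the associated martingale problem has a unique solution starting from $x$, so it suffices to establish tightness of $(X_N)$ and to show that every limit point solves this martingale problem.

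\textbf{Step 2 (fast mixing within valleys).} The heart of the matter is that, for every non-singleton $\ms E_x$ and all $\eta\neq\xi\in\ms E_x$,
\begin{equation*}
\lim_{N\to\infty} \frac{\Cap_N(\ms E_x,\breve{\ms E}_x)}{\Cap_N(\eta,\xi)} \;=\; 0\; ,
\end{equation*}
which expresses that the time to go from $\eta$ to $\xi$ inside $\ms E_x$ (order $\alpha_N$) is negligible compared with the escape time (order $\theta_N$). To see this, combine the identity \eqref{12} with the definition \eqref{b04} to get $\Cap_N(\ms E_x,\breve{\ms E}_x)=\mu_N(\ms E_x)\sum_{y\ne x} r^{\ms E}_N(\ms E_x,\ms E_y)$, so that (H1) yields $\Cap_N(\ms E_x,\breve{\ms E}_x)/\mu_N(\ms E_x) = O(1/\theta_N)$; on the other hand, (H2) gives $\Cap_N(\eta,\xi)/\mu_N(\ms E_x) \gtrsim 1/\alpha_N$. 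The ratio then vanishes because $\alpha_N\ll\theta_N$.

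\textbf{Step 3 (convergence).} With Steps 1 and 2 in place, I invoke the tunneling theorem. For bounded $f:S\to\bb R$, writing $\tilde f = f\circ \psi_{\ms E}$, the martingale
\begin{equation*}
M^N_f(t) \;=\; \tilde f(\eta^{\ms E}(\theta_N t)) - \tilde f(\eta^{\ms E}(0)) - \int_0^t \theta_N\,(\mc L^{\ms E}_N \tilde f)(\eta^{\ms E}(\theta_N s))\,ds
\end{equation*}
must converge to the martingale associated with the limiting generator. The essential input is a replacement lemma permitting one to substitute the integrand, which for $\eta\in\ms E_x$ equals $\theta_N \sum_{y\ne x} \sum_{\xi\in\ms E_y} R^{\ms E}_N(\eta,\xi)[f(y)-f(x)]$, by its $\mu_N$-average over $\ms E_x$, which by (H1) tends to $\sum_{y\ne x} r_{\ms E}(x,y)[f(y)-f(x)]$. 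This replacement is the main obstacle. It is powered by Step 2, but to turn the capacity ratio into an $L^2$-bound on time-averages one uses the sector condition (Corollary \ref{s09}) together with the comparison with reversible capacities (Corollary \ref{s10}) and the path estimates of Section \ref{sec03}; these tools let one reduce the non-reversible estimate to a reversible Dirichlet-form computation where capacities control variances. Tightness of $X_N$ in Skorohod then follows from an Aldous--Rebolledo argument applied to $M^N_f$, and uniqueness of the martingale problem identifies the limit and concludes the proof.
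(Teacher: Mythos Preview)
Your proposal is correct and follows essentially the same route as the paper: verify (H1) and the capacity-ratio condition (your Step~2 is exactly Lemma~\ref{s02}, with the same proof), then invoke the Beltr\'an--Landim tunneling theorem. The paper cites Theorem~2.1 in \cite{bl7}, whose hypotheses are precisely (H1) together with the vanishing of $\Cap_N(\ms E_x,\breve{\ms E}_x)/\Cap_N(\eta,\xi)$, so the proof in the paper is just those two lines.

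One comment on your Step~3: the elaboration you give there---bringing in the sector condition (Corollary~\ref{s09}), the comparison with reversible capacities (Corollary~\ref{s10}), and the path estimates of Section~\ref{sec03}---is unnecessary for this proposition and is not used by the paper at this point. Theorem~2.1 of \cite{bl7} takes (H1) and the capacity ratio as black-box hypotheses and delivers Skorohod convergence directly; the replacement lemma and martingale-problem machinery live inside that cited result and do not rely on the sector-condition tools developed in Sections~\ref{sec00}--\ref{sec03} of the present paper. Those tools are deployed later, in the proof of Theorem~\ref{mt2} (specifically Assertion~\ref{as13}), where one must compare capacities across different levels of the hierarchy. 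So while nothing you wrote is wrong, you are importing machinery that the argument does not need, and your sketch of Step~3 slightly misrepresents where the work is actually done.
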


\begin{lemma}
\label{s02}
For every $x\in S$ for which $\ms E_x$ is not a singleton and for all
$\eta\not =\xi\in \ms E_x$,
\begin{equation*}
\lim_{N\to \infty} \frac{\Cap_N(\ms E_x,\breve{\ms E}_x)}
{\Cap_N(\eta,\xi)}\;=\;0\; .
\end{equation*}
\end{lemma}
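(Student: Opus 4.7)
The plan is to directly compare the two capacities by using identity (\ref{12}) together with (H1) for the numerator and (H2) for the denominator, after which the conclusion is immediate from $\alpha_N \ll \theta_N$.

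First, I would apply identity (\ref{12}) with $A = \ms E_x$ and $B = \breve{\ms E}_x$, so that $A \cup B = \ms E$. This gives
\begin{equation*}
\Cap_N(\ms E_x, \breve{\ms E}_x) \;=\; \sum_{\eta \in \ms E_x} \mu_N(\eta) \sum_{\xi \in \breve{\ms E}_x} R^{\ms E}_N(\eta, \xi)\;.
\end{equation*}
Splitting $\breve{\ms E}_x = \sqcup_{y \neq x} \ms E_y$ and recalling the definition (\ref{b04}) of $r^{\ms E}_N(\ms E_x, \ms E_y)$, the right-hand side is exactly
\begin{equation*}
\mu_N(\ms E_x) \, \sum_{y \neq x} r^{\ms E}_N(\ms E_x, \ms E_y)\;.
\end{equation*}

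By assumption (H1), each sequence $\theta_N\, r^{\ms E}_N(\ms E_x, \ms E_y)$ has a finite nonnegative limit $r_{\ms E}(x,y)$. Hence there exist a constant $C < \infty$ and $N_0 \ge 1$ such that, for every $N \geq N_0$,
\begin{equation*}
\Cap_N(\ms E_x, \breve{\ms E}_x) \;\le\; \frac{C}{\theta_N}\, \mu_N(\ms E_x)\;.
\end{equation*}
For the denominator, since $\ms E_x$ is not a singleton and $\eta \neq \xi$ both lie in $\ms E_x$, assumption (H2) furnishes a constant $c > 0$ and $N_1 \ge 1$ such that, for every $N \geq N_1$,
\begin{equation*}
\Cap_N(\eta, \xi) \;\ge\; \frac{c}{\alpha_N}\, \mu_N(\ms E_x)\;.
\end{equation*}
Dividing yields $\Cap_N(\ms E_x, \breve{\ms E}_x)/\Cap_N(\eta, \xi) \le (C/c)\, \alpha_N/\theta_N$, which tends to zero because $\alpha_N \ll \theta_N$.

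There is essentially no conceptual obstacle in this argument; the step that carries the content is recognizing that identity (\ref{12}) recasts the capacity of the original chain in terms of the trace-process rates, so that (H1) applies verbatim to the numerator, while (H2) handles the denominator. The factor $\mu_N(\ms E_x)$ appears on both sides and cancels, which is precisely why the ratio reduces to $\alpha_N/\theta_N$.
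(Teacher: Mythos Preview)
Your proof is correct and follows essentially the same approach as the paper: apply identity \eqref{12} with $A=\ms E_x$, $B=\breve{\ms E}_x$ together with (H1) to control $\theta_N\,\Cap_N(\ms E_x,\breve{\ms E}_x)/\mu_N(\ms E_x)$, invoke (H2) for the denominator, and conclude from $\alpha_N\ll\theta_N$. The only difference is cosmetic: you write explicit upper and lower bounds where the paper simply records the limit of the numerator and appeals directly to (H2).
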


\begin{proof}
Fix $x\in S$.  By \eqref{12}, applied to $A= \ms E_x$, $B=\breve{\ms
  E}_x$, and by assumption (H1),
\begin{equation*}
\lim_{N\to\infty} \theta_N \, 
\frac{\Cap_N (\ms E_x, \breve{\ms E}_x)}
{\mu_N(\ms E_x)}\;=\; \sum_{y\not =x} r_{\ms E}(x,y) \;\in\;\bb R_+
\;.
\end{equation*}
The claim of the lemma follows from this equation, from assumption
(H2) and from the fact that $\alpha_N/\theta_N\to 0$.
\end{proof}

\begin{proof}[Proof of Proposition \ref{s04}]
In view of Theorem 2.1 in \cite{bl7}, the claim of the proposition
follows from condition (H1), and from Lemma \ref{s02}.
\end{proof}

\section{Proof of Theorem \ref{mt1}}
\label{sec01}

The proof of Theorem \ref{mt1} is divided in several steps.

\smallskip\noindent{\bf 1. The measure of the metastable sets.}  We
start proving that condition (H0) is in force. Recall from Section
\ref{not} that we denote by $X_R(t)$ the $E$-valued chain which jumps
from $\eta$ to $\xi$ at rate $R(\eta,\xi)$.  Denote by $\ms C_1,
\dots, \ms C_{\mf m}$ the equivalent classes of the chain $X_R(t)$.

\begin{asser}
\label{as03}
For all $1\le j\le \mf m$, and for all $\eta\not = \xi\in \ms C_j$, there
exists $m(\eta,\xi)\in (0,\infty)$ such that
\begin{equation*}
\lim_{N\to\infty} \frac{\mu_N(\eta)}{\mu_N(\xi)} \; = \; m(\eta,\xi)\;.
\end{equation*}
\end{asser}

\begin{proof}
Fix $1\le j\le \mf m$ and $\eta\not = \xi\in \ms C_j$. By assumption,
there exists a path $(\eta=\eta_0, \dots, \eta_n=\xi)$ such that
$R(\eta_i,\eta_{i+1})>0$ for $0\le i<n$. On the other hand, since $\mu_N$ is
an invariant probability measure,
\begin{align*}
\lambda_N(\xi) \, \mu_N(\xi) \; &=\; \sum_{\zeta_0, \zeta_1, \dots, \zeta_{n-1}\in E} 
\mu_N(\zeta_0) \, \lambda_N(\zeta_0) \, p_N(\zeta_0,\zeta_1) \cdots 
p_N(\zeta_{n-1}, \xi) \\
&\ge \; \mu_N(\eta_0) \, \lambda_N(\eta_0) \, p_N(\eta_0,\eta_1) \cdots 
p_N(\eta_{n-1}, \xi)\;.
\end{align*}
Therefore,
\begin{equation*}
\frac{\mu_N(\xi)}{\mu_N(\eta)} \;\ge\; 
\frac{\lambda_N(\eta)}{\lambda_N(\xi)} \, 
p_N(\eta,\eta_1) \cdots  p_N(\eta_{n-1}, \xi) \;.  
\end{equation*}
Since $R(\eta_i,\eta_{i+1})>0$ for $0\le i<n$, by \eqref{01},
$p_N(\eta_{i}, \eta_{i+1})$ converges to $p(\eta_{i},
\eta_{i+1})>0$. For the same reason, $\alpha_N \lambda_N(\eta)$
converges to $\lambda(\eta)\in (0,\infty)$. Finally, as $\xi$ and
$\eta$ belong to the same equivalent class, there exists a path from
$\xi$ to $\eta$ with similar properties to the one from $\eta$ to
$\xi$, so that $\alpha_N \lambda_N(\xi)$ converges to $\lambda(\xi)\in
(0,\infty)$.  In conclusion,
\begin{equation*}
\liminf_{N\to\infty} \frac{\mu_N(\xi)}{\mu_N(\eta)} \;>\; 0\;.
\end{equation*}
Replacing $\eta$ by $\xi$ we obtain that $\liminf
\mu_N(\eta)/\mu_N(\xi)>0$. Since by Lemma \ref{as02} the sequences
$\{\mu_N(\zeta) : N\ge 1\}$, $\zeta\in E$, are ordered,
$\mu_N(\eta)/\mu_N(\xi)$ must converge to some value in $(0,\infty)$.
\end{proof}

By the previous assertion for every $x\in S$ and $\eta\in\ms E_x$,
\begin{equation}
\label{09}
m_x(\eta) \;:=\; \lim_{N\to\infty}  \frac{\mu_N(\eta)}{\mu_N(\ms E_x)} \;\in\;
(0,1]\;,
\end{equation}
where we adopted the convention established in condition (H1) of
Section \ref{not}.

\smallskip\noindent{\bf 2. The time-scale.}  In this subsection, we
introduce a time-scale $\gamma_N$, we prove that it is much longer
than $\alpha_N$ and that it is of the same order of $\theta_N$. In
particular the requirement $\alpha_N/\theta_N\to 0$ is in force.

Denote by $\{\eta^{\ms E}_t : t\ge 0\}$ the trace of $\eta^N_t$ on the
set $\ms E$, and by $R^{\ms E}_N : \ms E \times \ms E\to \bb R_+$ the
jump rates of $\eta^{\ms E}_t$. Let
\begin{equation}
\label{02}
\frac 1{\gamma_N} \;=\; \sum_{x\in S} 
\sum_{\eta\in\ms E_x} \sum_{\xi \in \breve{\ms E}_x} R^{\ms E}_N(\eta,\xi) \;,
\end{equation}
where $\breve{\ms E}_x$ has been introduced in \eqref{27}.  The
sequence $\gamma_N$ represents the time needed to reach the set
$\breve{\ms E}_x $ starting from $\ms E_x$ for some $x\in S$. This
time scale might be longer for other sets $\ms E_y$, $y\not =x$, but
it is of the order $\gamma_N$ at least for one $x\in S$.  We could as
well have defined $\gamma_N$ as $\max_{x\in S} \max_{\eta\in\ms E_x}$
$\max_{\xi \in \breve{\ms E}_x} R^{\ms E}_N(\eta,\xi)$.

\begin{asser}
\label{as06}
The time scale $\gamma_N$ is much longer than the time-scale
$\alpha_N$:
\begin{equation*}
\lim_{N\to\infty} \frac{\alpha_N}{\gamma_N}\;=\;0\;.
\end{equation*}
\end{asser}

\begin{proof}
We have to show that $\alpha_N  R^{\ms E}_N (\eta,\xi)$ converges to
$0$ as $N\uparrow\infty$, for all $\eta\in\ms E_x$, $\xi\in\ms E_y$,
$x\not =y\in S$. Fix $x\not =y\in S$, $\eta\in\ms E_x$, $\xi\in\ms
E_y$. Since $\ms E_x$ is a recurrent class, $R(\eta,\zeta)=0$ for all
$\zeta\not\in \ms E_x$. On the other hand, by \cite[Proposition
6.1]{bl2} and by the strong Markov property, 
\begin{equation*}
R^{\ms E}_N (\eta,\xi) \;=\; \lambda_N(\eta)\, \bb P_\eta [ H_\xi =
H^+_{\ms E}] \;=\; R_N (\eta,\xi) \;+\; \sum_{\zeta\not\in \ms E} 
R_N (\eta,\zeta) \, \bb P_\zeta [ H_\xi = H_{\ms E}]\;. 
\end{equation*}
Since $R(\eta,\zeta)=0$ for all $\zeta\not\in \ms E_x$, it follows
from the previous identity and from the definition of $R(\eta,\zeta)$
that $\alpha_N R^{\ms E}_N (\eta,\xi) \to 0$, as claimed.
\end{proof}

By Assertion \ref{as01}, for all $x\in S$, $\eta\in \ms E_x$, $\xi\in
\breve{\ms E}_x$, with the convention adopted in condition (H1) of
Section \ref{not},
\begin{equation}
\label{06}
r_{\ms E} (\eta,\xi) \;:=\; \lim_{N\to\infty} 
\gamma_N\, R^{\ms E}_N (\eta, \xi) \;\in\; [0,1]\;.
\end{equation}

\begin{asser}
\label{as04}
For all $x\in S$, 
\begin{equation*}
\ell_x \;:=\; \lim_{N\to\infty} \gamma_N \, 
\frac{\Cap_N (\ms E_x, \breve{\ms E}_x)}
{\mu_N(\ms E_x)}\;\in\; \bb R_+ \;. \quad\text{Moreover}\;,\;\;
\ell \;=\; \sum_{x\in S} \ell_x \;>\;0 \;.
\end{equation*}
\end{asser}

\begin{proof}
By \eqref{12}, applied to $A= \ms E_x$, $B=\breve{\ms E}_x$, by \eqref{09}
and by \eqref{06},  
\begin{equation*}
\lim_{N\to\infty} \gamma_N \, 
\frac{\Cap_N (\ms E_x, \breve{\ms E}_x)}
{\mu_N(\ms E_x)}\;=\; \sum_{\eta\in \ms E_x} m_x(\eta) 
\sum_{\xi\in \breve{\ms E}_x} r_{\ms E}(\eta, \xi) \;\in\;\bb R_+
\;, 
\end{equation*}
which completes the proof of the first claim of the assertion.

By \eqref{02} and by definition of $r_{\ms E}(\eta, \xi)$,
\begin{equation*}
\sum_{x\in S} \sum_{\eta\in\ms E_x} \sum_{\xi \in \breve{\ms E}_x}
r_{\ms E}(\eta,\xi) \;=\;1\;,
\end{equation*}
so that
\begin{equation*}
\ell \;=\; \sum_{x\in S} \ell_x \;=\;
\sum_{x\in S} \sum_{\eta\in \ms E_x} m_x(\eta) 
\sum_{\xi\in \breve{\ms E}_x} r_{\ms E}(\eta, \xi) \;\ge\;
\min_{x\in S}\min_{\eta\in \ms E_x} m_x(\eta) \;>\; 0\;,
\end{equation*}
which is the second claim of the assertion.
\end{proof}

It follows from Assertion \ref{as04} that the time-scale $\gamma_N$ is
of the same order of $\theta_N$ in the sense that $\gamma_N/\theta_N$
converges as $N\uparrow\infty$:
\begin{equation}
\label{07}
\lim_{N\to\infty} \frac {\gamma_N}{\theta_N} \;=\; 
\ell \;\in\; (0,\infty)\;.
\end{equation}

\smallskip\noindent{\bf 3. The average jump rate, condition (H1).} 
Denote by $r_N(\ms E_x,\ms E_y)$ the mean rate at which the trace
process jumps from $\ms E_x$ to $\ms E_y$:
\begin{equation}
\label{04}
r_N(\ms E_x,\ms E_y) \; = \; \frac{1}{\mu_N(\ms E_x)}
\sum_{\eta\in\ms E_x} \mu_N(\eta) 
\sum_{\xi\in\ms E_y} R^{\ms  E}_N(\eta,\xi) \;.
\end{equation}

Next lemma follows from \eqref{09}, \eqref{06} and \eqref{07}. 

\begin{lemma}
\label{s06}
For every $x\not =y\in S$,
\begin{equation*}
r_{\ms E}(x,y) \;:=\;  \lim_{N\to\infty} \theta_N\, r_N(\ms E_x,\ms E_y) \; = \;
\frac 1\ell\, \sum_{\eta\in\ms E_x} m_x(\eta) 
\sum_{\xi\in\ms E_y} r_{\ms E} (\eta,\xi) \;\in\; \bb R_+\;
\end{equation*}  
\end{lemma}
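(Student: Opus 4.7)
The plan is to prove Lemma \ref{s06} by a direct computation, since all the necessary ingredients are already in place from the previous three steps of the argument. The only task is to rewrite $\theta_N\, r_N(\ms E_x, \ms E_y)$ in a form that separates the three factors whose limits we have already established, and then pass to the limit termwise using that the sums are finite.

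First I would start from the definition \eqref{04} of $r_N(\ms E_x, \ms E_y)$ and multiply by $\theta_N$, inserting a factor of $\gamma_N/\gamma_N$ to produce the quantities appearing in \eqref{06}. This gives the identity
\begin{equation*}
\theta_N\, r_N(\ms E_x,\ms E_y) \;=\; \frac{\theta_N}{\gamma_N}\, \sum_{\eta\in \ms E_x} \frac{\mu_N(\eta)}{\mu_N(\ms E_x)} \sum_{\xi\in \ms E_y} \gamma_N\, R^{\ms E}_N(\eta,\xi)\;.
\end{equation*}
The three factors inside (and in front of) the sums are, respectively: $\theta_N/\gamma_N$, the relative mass $\mu_N(\eta)/\mu_N(\ms E_x)$, and $\gamma_N R^{\ms E}_N(\eta,\xi)$.

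Next I would pass to the limit $N\to\infty$. By \eqref{07}, $\theta_N/\gamma_N \to 1/\ell$, which is finite and strictly positive by Assertion \ref{as04}. By \eqref{09}, for each $\eta\in\ms E_x$, $\mu_N(\eta)/\mu_N(\ms E_x) \to m_x(\eta) \in (0,1]$. By \eqref{06}, for each $\eta\in\ms E_x$ and $\xi\in\ms E_y \subset \breve{\ms E}_x$, $\gamma_N R^{\ms E}_N(\eta,\xi) \to r_{\ms E}(\eta,\xi) \in [0,1]$. Since the sums over $\eta\in\ms E_x$ and $\xi\in\ms E_y$ are finite, I can take the limit inside, yielding
\begin{equation*}
\lim_{N\to\infty}\theta_N\, r_N(\ms E_x,\ms E_y) \;=\; \frac{1}{\ell}\, \sum_{\eta\in \ms E_x} m_x(\eta) \sum_{\xi\in \ms E_y} r_{\ms E}(\eta,\xi)\;,
\end{equation*}
which is exactly the displayed formula. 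The limit lies in $\bb R_+$ since each factor does and the sums are finite.

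There is really no hard step here: the result is essentially a bookkeeping consequence of the three preceding computations (the existence of the limits $m_x(\eta)$, $r_{\ms E}(\eta,\xi)$, and $\ell$). If anything deserves attention, it is the justification that one may define $r_{\ms E}(x,y)$ via $\theta_N$ rather than via $\gamma_N$, which is exactly what the factor $\theta_N/\gamma_N \to 1/\ell$ takes care of. The strict positivity of $\ell$ from Assertion \ref{as04} is what makes this rescaling legitimate.
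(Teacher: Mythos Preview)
Your proof is correct and is exactly the argument the paper has in mind: the paper itself simply states that the lemma follows from \eqref{09}, \eqref{06} and \eqref{07}, and your proposal just spells out this straightforward passage to the limit.
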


\smallskip\noindent{\bf 4. Inside the metastable sets, condition
  (H2).} Next assertion shows that condition (H2) is in force.

\begin{asser}
\label{as05}
For every $x\in S$ for which $\ms E_x$ is not a singleton and for all
$\eta\not =\xi\in \ms E_x$, there exist constants $0<c_0<C_0<\infty$
such that
\begin{equation*}
c_0 \;\le\; \liminf_{N\to\infty}
\alpha_N\, \frac{\Cap_N(\eta,\xi)}{\mu_N(\ms E_x)}
\;\le\; \limsup_{N\to\infty}
\alpha_N\, \frac{\Cap_N(\eta,\xi)}{\mu_N(\ms E_x)}
\;\le\;  C_0 \;.
\end{equation*}
\end{asser}

\begin{proof}
Fix $x\in S$ for which $\ms E_x$ is not a singleton, and $\eta\not
=\xi\in \ms E_x$. On the one hand, by definition of the capacity
\begin{equation*}
\alpha_N\, \frac{\Cap_N(\eta,\xi)}{\mu_N(\ms E_x)} \;\le\;
\frac {\mu_N(\eta)}{\mu_N(\ms E_x)}\, 
\alpha_N\,\lambda_N(\eta)\;. 
\end{equation*}
By \eqref{01} and \eqref{09}, the right hand side converges to
$\lambda(\eta) m_x(\eta)<\infty$, which proves one of the
inequalities.

On the other hand, as $\ms E_x$ is an equivalent class which is not a
singleton, $\lambda(\zeta)>0$ for all $\zeta\in \ms E_x$, or, in other
words, $\ms E_x \subset E_0$.  Since $\eta \sim \xi$, there exists a
path $(\eta=\eta_0, \dots, \eta_n=\xi)$ such that
$R(\eta_i,\eta_{i+1})>0$ for $0\le i<n$. Since,
\begin{equation*}
\bb P_\eta \big[ H_{\xi} < H^+_{\eta} \big] \;\ge\;
p_N(\eta,\eta_1) \cdots p_N(\eta_{n-1},\xi)\;,
\end{equation*}
in view of the formula \eqref{28} for the capacity, we have that
\begin{equation*}
\alpha_N\, \frac{\Cap_N(\eta,\xi)}{\mu_N(\ms E_x)}
\;\ge\; \frac {\mu_N(\eta)}{\mu_N(\ms E_x)}\, 
\alpha_N\, \lambda_N(\eta)\, p_N(\eta,\eta_1) \cdots
p_N(\eta_{n-1},\xi)\;. 
\end{equation*}
The right hand side converges to $m_x(\eta) \lambda(\eta)
p(\eta,\eta_1) \cdots p(\eta_{n-1},\xi)>0$, which completes the proof
of the assertion.
\end{proof}

\smallskip\noindent{\bf 5. Condition (H3) holds.} 
To complete the proof of Theorem \ref{mt1} it remains to show that the
chain $\eta^N_t$ spends a negligible amount of time on the set
$\Delta$ in the time scale $\theta_N$.

\begin{lemma}
\label{s17}
For every $t>0$,
\begin{equation*}
\lim_{N\to \infty} \max_{\eta\in E} \, \bb E_\eta \Big[
\int_0^t \mb 1\{ \eta^N_{s\theta_N} \in \Delta\} \, ds  \Big] \;=\; 0\;. 
\end{equation*}
\end{lemma}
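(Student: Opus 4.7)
The plan is to decompose the occupation time of $\Delta$ into excursions and to control both the expected number of excursions in $[0, t\theta_N]$ and the expected duration of each, so that their product divided by $\theta_N$ vanishes uniformly in the initial configuration.

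The first ingredient is a uniform hitting-time bound: for some $K<\infty$ and every sufficiently large $N$,
\begin{equation*}
\max_{\eta \in E} \, \bb E_\eta[H_{\ms E}] \;\le\; K\, \alpha_N\;.
\end{equation*}
Any $\zeta\in \Delta$ must satisfy $\lambda(\zeta)>0$, since otherwise $\{\zeta\}$ would be a singleton recurrent class of $X_R$ and hence belong to $\ms E$; by \eqref{01}, $\alpha_N \lambda_N(\zeta)\to \lambda(\zeta)>0$, so the mean holding time at $\zeta$ is $O(\alpha_N)$ uniformly in $\zeta\in\Delta$. Moreover, for such $\zeta$ the jump probabilities $p_N(\zeta,\cdot)$ converge to $p(\zeta,\cdot)$, and since $\Delta$ is the transient set of $X_R$ there exist $M\in\bb N$ and $\delta>0$ such that, from every $\zeta\in\Delta$, the chain $X_R$ hits $\ms E$ in at most $M$ jumps with probability at least $\delta$. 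For $N$ large, the embedded jump chain of $\eta^N_t$ inherits the same property, and iteration via the strong Markov property bounds the expected number of jumps before $H_{\ms E}$ by a constant depending only on $M/\delta$; multiplying by the uniform holding-time bound yields the claim.

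The second ingredient controls the rate at which excursions into $\Delta$ are initiated. For $\eta\in\ms E_x$, set $\rho_N(\eta) = \sum_{\zeta\in\Delta} R_N(\eta,\zeta)$. Since $\ms E_x$ is a recurrent class of $X_R$ and $\Delta$ is disjoint from $\ms E_x$, we have $R(\eta,\zeta)=0$ for every $\zeta\in\Delta$, whence $\alpha_N\rho_N(\eta)\to 0$. Writing $\max_{\eta\in \ms E}\rho_N(\eta) = \epsilon_N/\alpha_N$ with $\epsilon_N\to 0$, the compensator formula gives
\begin{equation*}
\bb E_\eta \Big[ \#\{s\le t\theta_N : \eta^N_{s-}\in \ms E,\, \eta^N_s\in \Delta\} \Big] \;\le\; t\, \theta_N\, \max_{\eta'\in\ms E}\rho_N(\eta') \;=\; t\, \theta_N\, \epsilon_N/\alpha_N\;.
\end{equation*}
By the strong Markov property and the hitting-time bound of the previous paragraph, each such excursion has expected duration at most $K\alpha_N$, and if $\eta\in\Delta$, an initial excursion of expected duration at most $K\alpha_N$ may also contribute.

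Assembling the estimates,
\begin{equation*}
\bb E_\eta \Big[ \int_0^{t\theta_N} \mb 1\{\eta^N_s \in \Delta\}\, ds \Big] \;\le\; K\alpha_N \;+\; K\, t\, \theta_N\, \epsilon_N \;=\; o(\theta_N)
\end{equation*}
uniformly in $\eta\in E$, and the change of variable $u = s\theta_N$ gives the statement of the lemma. The main obstacle is the uniform hitting-time bound: it must be derived using only the convergence $\alpha_N R_N\to R$ and the definition of $\Delta$ as the transient set of $X_R$, since the refined estimates of Section \ref{not} are not yet available at this point. Once that bound is in hand, the excursion decomposition above is essentially automatic.
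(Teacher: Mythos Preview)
Your proof is correct and takes a genuinely different route from the paper's. The paper first rescales to the $\alpha_N$ time-scale: using $\alpha_N\ll\theta_N$ and the Markov property it bounds the quantity of interest by $(2t/T)\max_{\xi}\bb E_\xi[\int_0^T \mb 1\{\eta^N_{s\alpha_N}\in\Delta\}\,ds]$, and then controls the latter by the three-term estimate
\[
\bb P_\eta[H_{\ms E}>t_0\alpha_N]\;+\;\bb P_\eta[T_2<T\alpha_N]\;+\;t_0/T\,,
\]
where $T_2$ is the return time to $\Delta$ after $H_{\ms E}$; the first term is handled via Skorohod convergence of $\eta^N_{t\alpha_N}$ to $X_R$ (Assertion~\ref{as18}), the second via $\alpha_N R_N(\eta,\zeta)\to 0$ for $\eta\in\ms E$, $\zeta\in\Delta$. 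Your argument bypasses the time-scale change and the weak-convergence step: you prove a uniform moment bound $\bb E_\eta[H_{\ms E}]\le K\alpha_N$ directly from the convergence of the embedded chain, and combine it with a compensator bound on the number of $\ms E\to\Delta$ transitions. The two proofs use the same raw ingredients (transience of $\Delta$ for $X_R$, and $R(\eta,\zeta)=0$ for $\eta\in\ms E$, $\zeta\in\Delta$), but your excursion decomposition yields an explicit rate $K\alpha_N/\theta_N + Kt\epsilon_N$, whereas the paper's soft argument gives only the qualitative limit. The paper's approach, on the other hand, is reused verbatim at the next level of the recursion (Lemma~\ref{s16}), where the analogue of the moment bound on $H_{\ms G}$ is less immediate and Skorohod convergence of the trace (Proposition~\ref{s04}) is the natural tool.
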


\begin{proof}
Since $\alpha_N/\theta_N \to 0$, a change of
variables in the time integral and the Markov property show that for
every $\eta\in E$, for every $T>0$ and for every $N$ large enough,
\begin{equation*}
\bb E_\eta \Big[ \int_0^t 
\mb 1\{ \eta^N_{s \theta_N} \in \Delta \} 
\, ds \Big] \;\le\; \frac {2t}T\,
\max_{\xi\in E}  \, \bb E_\xi \Big[ \int_0^T
\mb 1\{ \eta^N_{s \alpha_N} \in \Delta \} 
\, ds \Big]\;.
\end{equation*}
Note that the process on the right hand side is
speeded up by $\alpha_N$ instead of $\theta_N$.

We estimate the expression on the right hand side of the previous
formula. We may, of course, restrict the maximum to $\Delta$. Let
$T_1$ be the first time the chain $\eta^{N}_t$ hits $\ms E$ and let
$T_2$ be the time it takes for the process to return to $\Delta$
after $T_1$:
\begin{equation*}
T_1 \;=\; H_{\ms E}\; , \quad
T_2 \;=\; \inf \big\{ s> 0 : \eta^N_{T_1+s} \in \Delta\big\} \; .
\end{equation*}

Fix $\eta\in \Delta$ and note that
\begin{equation}
\label{22}
\begin{split}
&  \bb E_\eta \Big[ \frac 1T \int_0^T
\mb 1\{ \eta^N_{s \alpha_N} \in \Delta \} 
\, ds \Big]  \\
& \qquad \;\le\; 
\bb P_\eta \big[ T_1 > t_0 \alpha_N \big]\; +\; 
\bb P_\eta \big[ T_{2} < T \alpha_N \big] 
\;+\; \frac{t_0}T
\end{split}
\end{equation}
for all $t_0>0$ because the time average is bounded by $1$ and because
on the set $\{T_1 \le t_0 \alpha_N\}\cap \{ T_{2} \ge T \alpha_N\}$
the time average is bounded by $t_0/T$. By Assertion \ref{as18} below,
the first term on the right hand side vanishes as $N\uparrow\infty$
and then $t_0\uparrow\infty$. On the other hand, by the strong Markov
property, the second term is bounded by $\max_{\xi\in \ms E} \bb P_\xi
[ H_{\Delta} \le T \alpha_N ]$. By definition of the set $\ms E$, for
every $\eta\in\ms E$ and every $\xi\in\Delta$, $\alpha_N R_N(\eta,\xi)
\to 0$ as $N\uparrow\infty$. This shows that for every $T>0$ the
second term on the right hand side of \eqref{22} vanishes as
$N\uparrow\infty$, which completes the proof of the lemma.
\end{proof}

\begin{asser}
\label{as18}
For every $\eta\in \Delta$,
\begin{equation*}
\lim_{t\to\infty}
\limsup_{N\to\infty} \bb P_\eta \big[ H_{\ms E} \ge t \alpha_N
\big]\;=\; 0\;.
\end{equation*}
\end{asser}

\begin{proof}
Recall that we denote by $X_R(t)$ the continuous-time Markov
chain on $E$ which jumps from $\eta$ to $\xi$ at rate $R(\eta,\xi) =
\lim_{N} \alpha_N R_N(\eta,\xi)$. Note that the set $\ms E$ consists
of recurrent points for the chain $X_R(t)$, while points in $\Delta$
are transient. Since the jump rates converge, the chain
$\eta^N_{t\alpha_N}$ converges in the Skorohod topology to $X_R(t)$.
Therefore, for all $t>0$, $\eta\in \Delta$,
\begin{equation*}
\limsup_{N\to\infty} \bb P_\eta \big[ H_{\ms E} \ge
t\,\alpha_N  \big] \;\le \; \mb P_\eta \big[ H_{\ms E} \ge t \big]\;,
\end{equation*}
where $\mb P_\eta$ stands for the law of the chain $X_R(t)$ starting
from $\eta$. Since the set of recurrent points for $X_R(t)$ is equal to
$\ms E = \Delta^c$, the previous probability vanishes as
$t\uparrow\infty$.  
\end{proof}

We conclude this section with an observation concerning the capacities
of the metastable sets $\ms E_x$.

\begin{asser}
\label{as08}
The sequences $\{\Cap_N (\ms E_x, \breve{\ms E}_x)/\mu_N(\ms E_x) :
N\ge 1\}$, $x\in S$, are ordered.
\end{asser}

\begin{proof}
Fix $x\in S$. By \eqref{12} applied to $A= \ms E_x$, $B=\breve{\ms E}_x$,
\begin{equation*}
\Cap_N (\ms E_x, \breve{\ms E}_x)
\;=\; \sum_{\eta\in \ms E_x} \mu_N(\eta) \sum_{\xi\in \breve{\ms E}_x}
R^{\ms E}_N (\eta, \xi) \;.  
\end{equation*}
The claim of the assertion follows from this identity, from
Assertion \ref{as01} and from \eqref{09}.
\end{proof}

\section{Proof of Theorem \ref{mt2}}

Theorem \ref{mt2} is proved in several steps.

\smallskip\noindent {\bf 1. The measure of configurations in $\ms
  G_a$.} We assumed in (H0) that all configurations in a set $\ms F_x$
have measure of the same order. We prove below in Assertion \ref{as07}
that a similar property holds for the sets $\ms G_a$.

Let 
\begin{equation*}
\lambda^{\ms F}_N(\ms F_x) \;=\; \sum_{y: y\not = x} r^{\ms F}_N(\ms
F_x,\ms F_y)\;, \quad
p^{\ms F}_N(\ms F_x,\ms F_y)  \;=\; \frac{r^{\ms F}_N(\ms F_x,\ms
  F_y)}{\lambda^{\ms F}_N(\ms F_x)}\;\; \text{if }\;
\lambda^{\ms F}_N(\ms F_x)>0 \;.
\end{equation*}
Denote by $P_0$ the subset of points in $P$ such that $\lambda_{\ms
  F}(x) = \sum_{y\not = x} r_{\ms F}(x,y)>0$. For all $x\in P_0$ let
$p_{\ms F}(x,y)=r_{\ms F}(x,y)/\lambda_{\ms F}(x)$.  It follows from
assumption (H1) that for all $x$, $z$ in $P$, $y\in P_0$,
\begin{equation}
\label{b01}
\lim_{N\to\infty} \beta_N \, \lambda^{\ms F}_N(\ms F_x)\;=\lambda_{\ms F}(x)\;,
\quad \lim_{N\to\infty} p^{\ms F}_N(\ms F_y,\ms F_z) \;=\; p_{\ms F}(y,z)\;.
\end{equation}

Recall that $X_{\ms F}(t)$ is the $P$-valued Markov chain which jumps
from $x$ to $y$ at rate $r_{\ms F}(x,y)$.  Denote by $C_a$, $a\in P_1
=\{1, \dots, \mf q_1\}$, the equivalent classes of the Markov chain
$X_{\ms F}(t)$, and let $\ms C_a = \cup_{x\in C_a} \ms F_x$. All
configurations in a set $\ms C_a$ have probability of the same order.

\begin{asser}
\label{as07}
For all equivalent classes $C_a$, $a\in P_1$, and for all $\eta\not =
\xi\in \ms C_a$, there exists $m(\eta,\xi)\in (0,\infty)$ such that
\begin{equation*}
\lim_{N\to\infty} \frac{\mu_N(\eta)}{\mu_N(\xi)} \; = \; m(\eta,\xi)\;.
\end{equation*}
\end{asser}

\begin{proof}
The argument is very close to the one of Assertion \ref{as03}
Denote by $\bar X_N (t)$ the chain $\eta^{\ms F} (t)$ in which each
set $\ms F_x$ has been collapsed to a point. We refer to the Section 3
of \cite{gl14} for a precise definition of the collapsed chain and for
the proof of the results used below.

The chain $\bar X_N(t)$ takes value in the set $P$, its jump rate
from $x$ to $y$, denoted by $\bar r_N(x,y)$, is equal to $r^{\ms
  F}_N(\ms F_x, \ms F_y)$ introduced in \eqref{b04}, and its unique
invariant probability measure, denoted by $\bar{\mu}_N(x)$, is given by
$\bar \mu_N(x) = \mu_N(\ms F_x)/\mu_N(\ms F)$.

Fix an equivalent class $C_a$ and $\eta\not = \xi\in \ms C_a$. If
$\eta$ and $\xi$ belong to the same set $\ms F_x$, the claim follows
from Assumption (H0). Suppose that $\eta\in\ms F_x$, $\xi\in\ms F_y$
for some $x\not =y\in C_a$.  By assumption, there exists a path
$(x=x_0, \dots, x_n=y)$ such that $r_{\ms F}(x_i,x_{i+1})>0$ for $0\le
i<n$.

Denote by $\bar\lambda_N(x)$, $x\in P$, the holding rates of the
collapsed chain $\bar X_N (t)$, and by $\bar p_N(x,y)$, $x\not = y\in
P$, the jump probabilities.  Since $\bar\mu_N$ is the invariant
probability measure for the collapsed chain,
\begin{align*}
\bar \lambda_N(y) \, \bar \mu_N(y) \; &=\; \sum_{z_0, z_1, \dots, z_{n-1}\in P} 
\bar \mu_N(z_0) \, \bar \lambda_N(z_0) \, \bar p_N(z_0,z_1) \cdots 
\bar p_N(z_{n-1}, y) \\
&\ge \; \bar \mu_N(x_0) \, \bar\lambda_N(x_0) \, \bar p_N(x_0,x_1) \cdots 
\bar p_N(x_{n-1}, y)\;.
\end{align*}
Therefore,
\begin{equation*}
\frac{\bar\mu_N(y)}{\bar\mu_N(x)} \;\ge\; 
\frac{\bar\lambda_N(x)}{\bar\lambda_N(y)} \, 
\bar p_N(x,x_1) \cdots  \bar p_N(x_{n-1}, y) \;.  
\end{equation*}
Since $r_{\ms F}(x_i,x_{i+1})>0$ for $0\le i<n$, by \eqref{b01}, $\bar
p_N(x_{i}, x_{i+1})$ converges to $p_{\ms F}(x_{i}, x_{i+1})>0$.  For
the same reason, $\beta_N \bar\lambda_N(x)= \beta_N \lambda^{\ms
  F}_N(\ms F_x)$ converges to $\lambda_{\ms F}(x)\in (0,\infty)$. As
$y$ and $x$ share the same properties, inverting their role we obtain
that $\beta_N \bar\lambda_N(y)$ converges to $\lambda_{\ms F}(y)\in
(0,\infty)$. In conclusion,
\begin{equation*}
\liminf_{N\to\infty} \frac{\bar\mu_N(x)}{\bar\mu_N(y)} \;>\; 0\;.
\end{equation*}
Replacing $x$ by $y$ we obtain that $\liminf \bar
\mu_N(y)/\bar\mu_N(x)>0$. By \cite{gl14}, $\bar\mu_N(z) = \mu_N(\ms
F_z)$, $z\in P$. To complete the proof it remains to recall the
statement of Lemma \ref{as02} and Assumption (H0).
\end{proof}

By the previous assertion for every $a\in Q$ and $\eta\in\ms G_a$,
\begin{equation}
\label{b11}
m^*_a(\eta) \;:=\; \lim_{N\to\infty}  \frac{\mu_N(\eta)}{\mu_N(\ms
  G_a)} \;\in\; (0,1]\;. 
\end{equation}
Thus, assumption (H0) holds for the partition $\{\ms G_1, \dots, \ms
G_{\mf q}, \Delta_{\ms G}\}$.

\smallskip\noindent {\bf 2. The time scale.} We prove in this
subsection that the time-scale $\beta^+_N$ introduced in \eqref{b05}
is much longer than $\beta_N$.

\begin{asser}
\label{as09}
We have that
\begin{equation*}
\lim_{N\to\infty} \frac{\beta_N}{\beta^+_N}\;=\; 0\;.
\end{equation*}
\end{asser}

\begin{proof}
We have to show that
\begin{equation*}
\lim_{N\to\infty} \beta_N \frac{\Cap_N (\ms G_a, \breve{\ms G}_a)} 
{\mu_N(\ms G_a)} \;=\;0
\end{equation*}
for each $a\in Q$. Fix $a\in Q$ and recall from \eqref{b12} the
definition of the set $\ms G_a$. Since $G_a$ is recurrent class for
the chain $X_{\ms F}(t)$, $r_{\ms F}(x,y)=0$ for all $x\in G_a$, $y\in
P\setminus G_a$. By definition of the capacity,
\begin{align*}
\frac{\Cap_N (\ms G_a, \breve{\ms G}_a)}{\mu_N(\ms G_a)} 
\; & =\; \sum_{\eta\in \ms G_a} \frac{\mu_N(\eta)}{\mu_N(\ms G_a)}
\lambda_N (\eta) \, \bb
P_{\eta} \big[ H_{\breve{\ms G}_a} < H^+_{\ms G_a} \big] \\
\; & \le \; \sum_{\eta\in \ms G_a} \frac{\mu_N(\eta)}{\mu_N(\ms G_a)}
\lambda_N (\eta) \, \bb
P_{\eta} \big[ H_{\ms F \setminus \ms G_a} < H^+_{\ms G_a} \big] \;.  
\end{align*}
By \cite[Proposition 6.1]{bl2}, this sum is equal to
\begin{equation*}
\sum_{\eta\in \ms G_a} \frac{\mu_N(\eta)}{\mu_N(\ms G_a)}
\, \sum_{\xi \in \ms F \setminus \ms G_a} R^{\ms F}_N (\eta,\xi)
\;=\; \sum_{x\in G_a} \frac{\mu_N(\ms F_x)}{\mu_N(\ms G_a)}
\sum_{y\in P \setminus G_a} r^{\ms F}_N(x,y)\;.
\end{equation*}
Since $r_{\ms F}(x,y)=0$ for all $x\in G_a$, $y\in P\setminus
G_a$, by assumption (H1) the previous sum multiplied by $\beta_N$
converges to $0$ as $N\uparrow\infty$.
\end{proof}

\smallskip\noindent
{\bf 3. Condition (H1) is fulfilled by the partition $\{\ms G_1,
  \dots, \ms G_{\mf q}, \Delta_{\ms G}\}$.}
We first obtain an alternative formula for the time-scale
$\beta^+_N$. The arguments and the ideas are very similar to the ones
presented in the previous section. Let 
\begin{equation*}
\frac 1{\gamma_N} \;=\; \sum_{a\in Q} \sum_{\eta\in \ms G_a} 
\sum_{\xi\in \breve{\ms G}_a} R^{\ms G}_N (\eta, \xi) \;.
\end{equation*}
By Assertion \ref{as01}, for all $a\in Q$, $\eta\in \ms G_a$, $\xi\in
\breve{\ms G}_a$, with the convention adopted in condition (H1) of
Section \ref{not},
\begin{equation}
\label{b06}
r_{\ms G} (\eta,\xi) \;:=\; \lim_{N\to\infty} 
\gamma_N \, R^{\ms G}_N (\eta, \xi) \;\in\; [0,1]\;.
\end{equation}

\begin{asser}
\label{as12}
For all $a\in Q$, 
\begin{equation*}
\hat \lambda_{\ms G} (a) \;:=\; \lim_{N\to\infty} \gamma_N \, 
\frac{\Cap_N (\ms G_a, \breve{\ms G}_a)}
{\mu_N(\ms G_a)}\;\in\; \bb R_+ \;. \quad\text{Moreover}\;,\;\;
\hat \lambda_{\ms G} \;=\; \sum_{a\in Q} \hat \lambda_{\ms G} (a) \;>\;0 \;.
\end{equation*}
\end{asser}

\begin{proof}
Fix $a\in Q$. By \eqref{12}, applied to $A= \ms G_a$, $B=\breve{\ms
  G}_a$, by \eqref{b11} and by \eqref{b06}, 
\begin{equation*}
\lim_{N\to\infty} \gamma_N\, 
\frac{\Cap_N (\ms G_a, \breve{\ms G}_a)}
{\mu_N(\ms G_a)}\;=\; \sum_{\eta\in \ms G_a} m^*_a(\eta) 
\sum_{\xi\in \breve{\ms G}_a} r_{\ms G}(\eta, \xi) \;\in\;\bb R_+
\;, 
\end{equation*}
which completes the proof of the first claim of the assertion.

By definition of $\gamma_N$ and by definition of $r_{\ms G}(\eta,
\xi)$,
\begin{equation*}
\sum_{a\in Q} \sum_{\eta\in\ms G_a} \sum_{\xi \in \breve{\ms G}_a}
r_{\ms G}(\eta,\xi) \;=\;1\;,
\end{equation*}
so that
\begin{equation*}
\hat \lambda_{\ms G} \;=\; \sum_{a\in Q} \hat \lambda_{\ms G} (a) \;=\;
\sum_{a\in Q} \sum_{\eta\in \ms G_a} m^*_a(\eta) 
\sum_{\xi\in \breve{\ms G}_a} r_{\ms G}(\eta, \xi) \;\ge\;
\min_{a\in Q}\min_{\eta\in \ms G_a} m^*_a(\eta) \;>\; 0\;,
\end{equation*}
which is the second claim of the assertion.
\end{proof}

It follows from the previous assertion that the time-scale $\gamma_N$
is of the same order of $\beta^+_N$:
\begin{equation}
\label{b07}
\lim_{N\to\infty} \frac {\gamma_N}{\beta^+_N} \;=\; 
\hat \lambda_{\ms G} \;\in (0,\infty) \;.
\end{equation}

Denote by $r^{\ms G}_N(\ms G_a,\ms G_b)$ the mean rate at which the
trace process jumps from $\ms G_a$ to $\ms G_b$:
\begin{equation}
\label{b03}
r^{\ms G}_N(\ms G_a,\ms G_b) \; := \; \frac{1}{\mu_N(\ms G_a)}
\sum_{\eta\in\ms G_a} \mu_N(\eta) 
\sum_{\xi\in\ms G_b} R^{\ms  G}_N(\eta,\xi) \;.
\end{equation}

\begin{lemma}
\label{s07}
For every $a\not =b\in Q$,
\begin{equation*}
r_{\ms G}(a,b) \;:=\;  \lim_{N\to\infty} \beta^+_N \, 
r^{\ms G}_N(\ms G_a,\ms G_b) \; = \;
\frac 1{\hat \lambda_{\ms G}} \, \sum_{\eta\in\ms G_a} m^*_a(\eta) 
\sum_{\xi\in\ms G_b} r_{\ms G} (\eta,\xi) \;\in\; \bb R_+\;
\end{equation*}
Moreover,
\begin{equation*}
\sum_{a \in Q} \sum_{b: b\not =a} r_{\ms G}(a,b) \;=\; 1\;.
\end{equation*}
\end{lemma}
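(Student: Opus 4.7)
The plan is to mimic the proof of Lemma \ref{s06} in the preceding section, since Lemma \ref{s07} is its exact analog one level up the recursive hierarchy. All the ingredients have already been established: equation \eqref{b11} controls the relative measures inside a valley $\ms G_a$, equation \eqref{b06} controls the rescaled trace rates on $\ms G$, and equation \eqref{b07} relates the auxiliary time-scale $\gamma_N$ to $\beta^+_N$.

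First I would start from the definition \eqref{b03} and multiply through by $\beta^+_N$, writing $\beta^+_N = (\beta^+_N/\gamma_N)\gamma_N$ to obtain
\begin{equation*}
\beta^+_N \, r^{\ms G}_N(\ms G_a,\ms G_b) \;=\; \frac{\beta^+_N}{\gamma_N} \sum_{\eta\in\ms G_a} \frac{\mu_N(\eta)}{\mu_N(\ms G_a)} \sum_{\xi\in\ms G_b} \gamma_N\, R^{\ms G}_N(\eta,\xi)\;.
\end{equation*}
Then I pass to the limit term by term. The prefactor converges to $1/\hat \lambda_{\ms G}$ by \eqref{b07}; each ratio $\mu_N(\eta)/\mu_N(\ms G_a)$ converges to $m^*_a(\eta) \in (0,1]$ by \eqref{b11}; and each $\gamma_N R^{\ms G}_N(\eta,\xi)$ converges to $r_{\ms G}(\eta,\xi) \in [0,1]$ by \eqref{b06}. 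Since all the sums are finite and all limits exist in $\mathbb{R}_+$, the full product converges to
\begin{equation*}
\frac{1}{\hat \lambda_{\ms G}}\, \sum_{\eta\in\ms G_a} m^*_a(\eta) \sum_{\xi\in\ms G_b} r_{\ms G}(\eta,\xi)\;,
\end{equation*}
which is exactly the claimed formula for $r_{\ms G}(a,b)$, and is finite.

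For the normalization identity, I would sum the formula just obtained over all ordered pairs $a\neq b$ in $Q$. Swapping the order of summation and recognizing $\bigsqcup_{b\ne a}\ms G_b = \breve{\ms G}_a$ gives
\begin{equation*}
\sum_{a\in Q}\sum_{b\ne a} r_{\ms G}(a,b) \;=\; \frac{1}{\hat \lambda_{\ms G}} \sum_{a\in Q}\sum_{\eta\in\ms G_a} m^*_a(\eta) \sum_{\xi\in\breve{\ms G}_a} r_{\ms G}(\eta,\xi)\;,
\end{equation*}
and the double sum on the right is precisely $\hat \lambda_{\ms G}$ by the expression derived in Assertion \ref{as12}. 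The two factors cancel, giving $1$.

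There is no real obstacle here; the lemma is a packaging of earlier limits. The only thing to be careful about is that $\hat \lambda_{\ms G}>0$ (so division is legitimate) and that $\gamma_N/\beta^+_N$ has a strictly positive finite limit, both of which are supplied by Assertion \ref{as12} and \eqref{b07}. The substantive work (existence of the limits $m^*_a(\eta)$ and $r_{\ms G}(\eta,\xi)$, and identification of $\hat \lambda_{\ms G}$) has already been done; Lemma \ref{s07} is simply the clean averaged statement that feeds hypothesis (H1) back into the recursive scheme for the partition $\{\ms G_1,\dots,\ms G_{\mf q},\Delta_{\ms G}\}$.
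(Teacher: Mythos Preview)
Your proof is correct and follows exactly the paper's approach: the first claim is deduced from \eqref{b11}, \eqref{b06} and \eqref{b07}, and the normalization is obtained by summing the explicit formula and recognizing $\sum_{a}\hat\lambda_{\ms G}(a)=\hat\lambda_{\ms G}$ from Assertion \ref{as12}. You have simply spelled out in detail what the paper states in one line.
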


\begin{proof}
The first claim of this lemma follows from \eqref{b11}, \eqref{b06}
and \eqref{b07}. On the other hand, by the explicit formula for
$r_{\ms G}(a,b)$ and by the formula for $\hat \lambda_{\ms G} (a)$
obtained in the previous assertion,
\begin{equation*}
\sum_{a \in Q} \sum_{b: b\not =a} r_{\ms G}(a,b) \;=\;
\frac 1{\hat \lambda_{\ms G}} \,  \sum_{a \in Q} 
\sum_{\eta\in\ms G_a} m^*_a(\eta) \sum_{b: b\not =a}
\sum_{\xi\in\ms G_b} r_{\ms G} (\eta,\xi) 
\;=\;\frac 1{\hat \lambda_{\ms G}} \,  \sum_{a \in Q} 
\hat \lambda_{\ms G} (a) \;.
\end{equation*}
This expression is equal to $1$ by definition of $\hat \lambda_{\ms
  G}$.  
\end{proof}

\smallskip\noindent
{\bf 4. Condition (H2) is fulfilled by the partition $\{\ms G_1,
  \dots, \ms G_{\mf q}, \Delta_{\ms G}\}$.}
The proof of condition (H2) is based on the next assertion.

\begin{asser}
\label{as13}
For every $a\in Q$ for which $\ms G_a$ is not a singleton and for all
$\eta\not =\xi\in \ms G_a$, 
\begin{equation*}
\liminf_{N\to\infty}
\beta_N \frac{\Cap_N(\eta,\xi)}{\mu_N(\ms G_a)}
\;>\; 0 \;.
\end{equation*}
\end{asser}

\begin{proof}
Throughout this proof $c_0$ represents a positive real number
independent of $N$ and which may change from line to line.  Fix $a\in
Q$ for which $\ms G_a$ is not a singleton, and $\eta\not =\xi\in \ms
G_a$. By definition, $\ms G_a = \cup_{x\in G_a} \ms F_x$. If $\eta$
and $\xi$ belongs to the same $\ms F_x$, the result follows from
assumption (H2) and from Assertion \ref{as07}.

Fix $\eta\in \ms F_x$ and $\xi\in \ms F_y$ for some $x\not =y$, $\ms
F_x \cup \ms F_y\subset \ms G_a$.  Recall that we denote by
$\Cap^s_N(\ms A, \ms B)$ the capacity between two disjoint subsets
$\ms A$, $\ms B$ of $E$ with respect to the reversible chain
introduced in \eqref{30}.

Since $G_a$ is a recurrent class for the chain $X_{\ms F}(t)$, there
exists a sequence $(x=x_0, x_1, \dots, x_n=y)$ such that $r_{\ms
  F}(x_i,x_{i+1})>0$ for $0\le i<n$. in view of assumptions (H0) and
(H1), there exist $\xi_i\in\ms F_{x_i}$, $\eta_{i+1}\in\ms
F_{x_{i+1}}$ such that $\beta_N R^{\ms F}_N (\xi_i, \eta_{i+1})\ge
c_0$. Therefore, by Corollary \ref{s10} and \eqref{19},
\begin{equation}
\label{20}
\beta_N \, \Cap^s_N (\xi_i, \eta_{i+1}) \;\ge\; \frac{\beta_N}{2|E|} \, 
\Cap_N (\xi_i, \eta_{i+1}) \;\ge\; c_0 \, \mu_N(\xi_i) \;,
\end{equation}
so that, by \eqref{24}, $\beta_N \, \bs c^s_N (\xi_i, \eta_{i+1}) \ge
c_0 \, \mu_N(\xi_i)$.

Since the configuration $\eta$ and $\xi_0$ belongs to the same set
$\ms F_x$, by assumption (H2), $\beta^-_N \Cap_N(\eta,\xi_0)/\mu_N(\ms
F_x) \ge c_0$. A similar assertion holds for the pair of
configurations $\eta_i$, $\xi_i$, $1\le i<n$, and for the pair
$\eta_n$, $\xi$. Hence, if we set $\eta_0=\eta$, $\xi_n=\xi$, by
Corollary \ref{s10} and \eqref{24}, we have that
\begin{equation*}
\beta^-_N \bs c^s_N(\eta_i,\xi_i) \;\ge\; c_0 \mu_N(\ms F_{x_i})\;.
\end{equation*}

By \eqref{b11}, we may replace $\mu_N(\ms F_{x_i})$ by $\mu_N(\ms
G_{a})$ in the previous inequality, and $\mu_N(\xi_i)$ by $\mu_N(\ms
G_{a})$ in \eqref{20}. By \eqref{26},
\begin{equation*}
\bs c^s_N(\eta,\xi) \;\ge\; \min_{0\le i <n} \min\big\{ \bs c^s_N(\eta_i,
\xi_i), \bs c^s_N(\xi_i, \eta_{i+1}), \bs c^s_N(\eta_n, \xi_n)\big\}\;.
\end{equation*}
Since $\beta^-_N\ll \beta_N$, it follows from the previous estimates
that $\beta_N \bs c^s_N(\eta,\xi) \ge c_0 \mu_N(\ms G_{a})$. To complete
the proof, it remains to recall that, by Corollary \ref{s10} and
\eqref{24}, $\Cap_N(\eta,\xi) \ge \Cap^s_N(\eta,\xi) \ge c_0 \bs
c^s_N(\eta,\xi)$.
\end{proof}

\smallskip\noindent {\bf 5. Condition (H3) is fulfilled by the partition $\{\ms G_1,
  \dots, \ms G_{\mf q}, \Delta_{\ms G}\}$.} Lemma \ref{s15} shows that it is enough to prove
condition (H3) for the trace process $\eta^{\ms F}(t)$.

\begin{lemma}
\label{s15}
Assume that 
\begin{equation*}
\lim_{N\to \infty} \max_{\eta\in \mc F} \bb E_\eta \Big[ \int_0^t 
\mb 1\{ \eta^{\ms F}_{s \beta^+_N} \in \Delta_* \} \, ds
\Big]\;=\; 0\;,
\end{equation*}
where $\Delta_* = \cup_{x\in G_{\mf q+1}} \ms F_x$ has been introduced in
\eqref{b12}.  Then,
\begin{equation*}
\lim_{N\to \infty} \max_{\eta\in E} \bb E_\eta \Big[ \int_0^t 
\mb 1\{ \eta^{N}_{s \beta^+_N} \in \Delta_{\ms G} \} \, ds \Big]\;=\; 0\;.
\end{equation*}
\end{lemma}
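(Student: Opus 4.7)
The strategy is to split the indicator according to the disjoint decomposition $\Delta_{\ms G} = \Delta_{\ms F} \sqcup \Delta_*$ established in \eqref{b12}, and to bound the two resulting occupation integrals separately. For the $\Delta_{\ms F}$ piece I would mimic the beginning of the proof of Lemma \ref{s17}: since hypothesis (H3) is assumed for the partition $\{\ms F_1, \dots, \ms F_{\mf p}, \Delta_{\ms F}\}$ with time scales $(\beta^-_N, \beta_N)$, for every fixed $T_0 > 0$ the quantity
\begin{equation*}
\epsilon_N(T_0) \;:=\; \max_{\xi\in E} \bb E_\xi \Big[ \int_0^{T_0} \mb 1\{ \eta^N_{u \beta_N} \in \Delta_{\ms F}\}\, du \Big]
\end{equation*}
tends to $0$. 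A change of variable $u = s \beta^+_N/\beta_N$ together with the strong Markov property applied at the deterministic times $kT_0 \beta_N$, $k \ge 0$, then yields
\begin{equation*}
\bb E_\eta \Big[ \int_0^t \mb 1\{ \eta^N_{s \beta^+_N} \in \Delta_{\ms F}\}\, ds \Big] \;\le\; \Big( \frac{t}{T_0} + \frac{\beta_N}{\beta^+_N}\Big)\, \epsilon_N(T_0)\;,
\end{equation*}
which vanishes uniformly in $\eta$ as $N \to \infty$, since $\beta_N/\beta^+_N \to 0$ by Assertion \ref{as09}.

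For the $\Delta_*$ piece I would exploit the fact that $\Delta_* \subset \ms F$. Let $T_{\ms F}(u) = \int_0^u \mb 1\{\eta^N_r \in \ms F\}\, dr$ denote the local time at $\ms F$, and let $S_{\ms F}$ be its right-continuous inverse, so that $\eta^{\ms F}_v = \eta^N_{S_{\ms F}(v)}$. The standard change-of-variable identity for the trace process gives
\begin{equation*}
\int_0^{t\beta^+_N} \mb 1\{\eta^N_u \in \Delta_*\}\, du \;=\; \int_0^{T_{\ms F}(t\beta^+_N)} \mb 1\{\eta^{\ms F}_v \in \Delta_*\}\, dv \;\le\; \int_0^{t\beta^+_N} \mb 1\{\eta^{\ms F}_v \in \Delta_*\}\, dv\;,
\end{equation*}
where the equality uses that $\Delta_* \subset \ms F$ (so contributions to the integral only come from times when the local time is genuinely advancing), and the inequality uses $T_{\ms F}(u) \le u$. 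Dividing by $\beta^+_N$, taking expectations, and noting that under $\bb P_\eta$ the trace process starts at the random point $\eta^N_{H_{\ms F}} \in \ms F$, we deduce
\begin{equation*}
\bb E_\eta \Big[\int_0^t \mb 1\{\eta^N_{s\beta^+_N} \in \Delta_*\}\, ds\Big] \;\le\; \max_{\xi\in \ms F} \bb E_\xi \Big[\int_0^t \mb 1\{\eta^{\ms F}_{s\beta^+_N} \in \Delta_*\}\, ds\Big]\;,
\end{equation*}
whose right-hand side vanishes by the hypothesis of the lemma.

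The main obstacle is the change-of-variable identity in the second step, which must be justified rigorously via the standard construction of the trace process through the inverse of the additive functional $T_{\ms F}$; everything else is routine bookkeeping. A pleasant feature of the argument is that the case $\eta \in \Delta_{\ms F}$ is handled automatically: the process spends time in $\Delta_{\ms F}$ before entering $\ms F$, but this only contributes to the $\Delta_{\ms F}$ integral, which is controlled by the previous step's (H3) via the rescaling bound above.
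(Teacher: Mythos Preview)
Your proof is correct and follows essentially the same route as the paper's: split $\Delta_{\ms G}=\Delta_{\ms F}\sqcup\Delta_*$, control the $\Delta_{\ms F}$ piece by rescaling to time scale $\beta_N$ and applying (H3) via the Markov property on blocks, and control the $\Delta_*$ piece by the trace--local-time inequality $\int_0^{t\beta^+_N}\mb 1\{\eta^N_u\in\Delta_*\}\,du\le\int_0^{t\beta^+_N}\mb 1\{\eta^{\ms F}_v\in\Delta_*\}\,dv$ together with the hypothesis. The paper's version is terser (it states the trace bound as an inequality without the local-time justification, and takes $T_0=t$ so the block count gives the constant $2$ rather than your $t/T_0+\beta_N/\beta^+_N$), but the two arguments are the same.
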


\begin{proof}
Fix $\eta \in E$. Since $\Delta_{\ms G} = \Delta_* \cup \Delta_{\ms F}$,
\begin{align*}
& \bb E_\eta \Big[ \int_0^t \mb 1\{ \eta_{s \beta^+_N} 
\in \Delta_{\ms F} \cup \Delta_* \} \, ds \Big] \\ 
&\qquad \;\le\; \bb E_\eta \Big[ \int_0^t \mb 1\{ \eta_{s \beta^+_N} 
\in \Delta_{\ms F}\} \, ds \Big] \;+\; \max_{\xi\in \ms F}\,
\bb E_\xi \Big[ \int_0^t \mb 1\{ \eta^{\ms F}_{s \beta^+_N} 
\in \Delta_* \} \, ds \Big]\;.    
\end{align*}
The second term vanishes as $N\uparrow\infty$ by assumption.
The first one is bounded by
\begin{equation*}
\frac {\beta_N}{\beta^+_N} \sum_{n=0}^{[\beta^+_N/\beta_N]} 
\bb E_\eta \Big[ \int_{nt}^{(n+1)t} \mb 1\{ \eta_{s \beta_N} 
\in \Delta_{\ms F}\} \, ds \Big]\;,
\end{equation*}
where $[r]$ stands for the integer part of $r$. By the Markov
property, this expression is bounded above by
\begin{equation*}
2\, \max_{\xi\in E} 
\bb E_\xi \Big[ \int_{0}^{t} \mb 1\{ \eta_{s \beta_N} 
\in \Delta_{\ms F}\} \, ds \Big]\; ,
\end{equation*}
which vanishes as $N\uparrow\infty$ by assumption (H3).
\end{proof}

To prove that condition (H3) is fulfilled by the partition $\{\ms G_1,
\dots, \ms G_{\mf q}, \Delta_{\ms G}\}$ it remains to show that the
assumption of the previous lemma is in force.  The proof of this claim
relies on the next assertion. Denote by $\bb P^{\ms F}_\eta$ the
probability measure on $D(\bb R_+, \ms F)$ induced by the trace chain
$\eta^{\ms F}_t$ starting from $\eta$.

\begin{asser}
\label{as17}
For every $\eta\in \Delta_*$,
\begin{equation*}
\lim_{t\to\infty}
\limsup_{N\to\infty} \bb P^{\ms F}_\eta \big[ H_{\ms G} \ge t \beta_N
\big]\;=\; 0\;.
\end{equation*}
\end{asser}

\begin{proof}
Fix $\eta\in \ms F_x \subset \Delta_*$.  Since the partition $\ms F_1,
\dots, \ms F_{\mf p}$, $\Delta_{\ms F}$ satisfy the conditions (H1)--(H3), by
Proposition \ref{s04}, starting from $\eta$ the process $X_N(t) =
\psi_{\ms F}(\eta^{\ms F}_{t\beta_N})$ converges in the Skorohod
topology to the Markov chain $X_{\ms F}(t)$ on $P=\{1, \dots, \mf p\}$
which starts from $x$ and which jumps from $y$ to $z$ at rate $
r_{\ms F}(y,z)$.  Therefore,
\begin{equation*}
\limsup_{N\to\infty} \bb P^{\ms F}_\eta \big[ H_{\ms G} \ge
t\,\beta_N  \big] \;\le \; \mb P_x \big[ H_R \ge t \big]\;,
\end{equation*}
where $\mb P_x$ represents the distribution of the chain $X_{\ms
  F}(t)$ starting from $x$ and $R = \cup_{1\le a\le \mf q} G_a$.
Since $R$ corresponds to the set of recurrent points of the chain
$X_{\ms F}(t)$, the previous expression vanishes as $t\uparrow\infty$.
\end{proof}

\begin{lemma}
\label{s16}
For all $t>0$,
\begin{equation*}
\lim_{N\to \infty} \max_{\eta\in \ms F} \bb E_\eta \Big[ \int_0^t 
\mb 1\{ \eta^{\ms F}_{s \beta^+_N} \in \Delta_* \} 
\, ds \Big]\;=\; 0\;.
\end{equation*}
\end{lemma}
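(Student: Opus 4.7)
The plan is to mirror the proof of Lemma \ref{s17}, replacing $\alpha_N, \theta_N, \Delta, \ms E$ by $\beta_N, \beta^+_N, \Delta_*, \ms G$ and using Assertions \ref{as09} and \ref{as17} in place of Assertion \ref{as18}. Since $\beta_N/\beta^+_N \to 0$ by Assertion \ref{as09}, a change of variables and the Markov property applied to the trace chain $\eta^{\ms F}$ yield, for each fixed $T>0$ and all large $N$,
\begin{equation*}
\max_{\eta\in\ms F} \bb E^{\ms F}_\eta \Big[ \int_0^t
\mb 1\{ \eta^{\ms F}_{s \beta^+_N} \in \Delta_* \} \, ds \Big]
\;\le\; \frac{2t}{T}\,\max_{\xi\in\ms F} \bb E^{\ms F}_\xi \Big[ \int_0^T
\mb 1\{ \eta^{\ms F}_{s \beta_N} \in \Delta_* \} \, ds \Big]\;,
\end{equation*}
so it suffices to make the time-averaged quantity on the right small. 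For $\xi\in\ms F$, set $T_1 = H_{\ms G}$ (which is $0$ if $\xi\in\ms G$) and let $T_2$ be the first time after $T_1$ at which $\eta^{\ms F}$ returns to $\Delta_*$. On $\{T_1\le t_0\beta_N,\,T_2\ge T\beta_N\}$ the time average is bounded by $t_0/T$, whence
\begin{equation*}
\bb E^{\ms F}_\xi \Big[ \frac{1}{T}\int_0^T
\mb 1\{ \eta^{\ms F}_{s\beta_N} \in \Delta_* \} \, ds \Big]
\;\le\; \bb P^{\ms F}_\xi[T_1 > t_0\beta_N] \;+\;
\bb P^{\ms F}_\xi[T_2 < T\beta_N] \;+\; \frac{t_0}{T}\;.
\end{equation*}

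The first term is handled by Assertion \ref{as17}: it is $0$ when $\xi\in\ms G$, and its limsup in $N$ (uniformly in the finite set $\xi\in\Delta_*$) vanishes as $t_0\to\infty$. By the strong Markov property at $T_1$, the second term is bounded by $\max_{\zeta\in\ms G} \bb P^{\ms F}_\zeta[H_{\Delta_*}<T\beta_N]$. Since $\ms F = \ms G \sqcup \Delta_*$, reaching $\Delta_*$ requires a direct jump of the trace chain from $\ms G$ to $\Delta_*$, and hence
\begin{equation*}
\bb P^{\ms F}_\zeta[H_{\Delta_*}<T\beta_N] \;\le\; T\beta_N
\max_{\zeta'\in\ms G}\sum_{\zeta''\in\Delta_*} R^{\ms F}_N(\zeta',\zeta'')\;.
\end{equation*}
For $\zeta'\in\ms F_x$ with $x\in G_a$ ($a\in Q$) and $\zeta''\in\ms F_y$ with $y\in G_{\mf q+1}$, definition \eqref{b04} gives $\mu_N(\zeta')R^{\ms F}_N(\zeta',\zeta'') \le \mu_N(\ms F_x)\,r^{\ms F}_N(\ms F_x,\ms F_y)$. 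Since $r_{\ms F}(x,y)=0$ (because $x$ is recurrent and $y$ transient for $X_{\ms F}$), assumption (H1) gives $\beta_N r^{\ms F}_N(\ms F_x,\ms F_y)\to 0$, while assumption (H0) applied to the partition $\{\ms F_x,\Delta_{\ms F}\}$ forces $\mu_N(\ms F_x)/\mu_N(\zeta')$ to stay bounded; thus $\beta_N R^{\ms F}_N(\zeta',\zeta'')\to 0$, and the second term tends to $0$ for each fixed $T$.

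Combining the three estimates, one first lets $N\to\infty$ (killing the two probability terms), then takes $T/t_0\to\infty$ to kill $t_0/T$, and finally $t_0\to\infty$ to remove the first-term remainder; the right-hand side of the first display then tends to $0$. The main obstacle is the second term: it is here that one must convert the global condition $r_{\ms F}(x,y)=0$ for $(x,y)\in R\times G_{\mf q+1}$ into the pointwise estimate $\beta_N R^{\ms F}_N(\zeta',\zeta'')\to 0$ via (H0) and the definition of the mean trace rate, and this is the only step where the hypotheses on the partition $\{\ms F_1,\dots,\ms F_{\mf p},\Delta_{\ms F}\}$ are used substantively; everything else is a direct transcription of the argument in Lemma \ref{s17}.
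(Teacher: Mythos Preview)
Your proof is correct and follows the paper's strategy closely: the reduction from time-scale $\beta^+_N$ to $\beta_N$, the decomposition via $T_1=H_{\ms G}$ and the return time $T_2$, and the use of Assertion~\ref{as17} for the first term are exactly as in the paper. (Your description of the order of limits at the end is slightly garbled---the first probability term does not vanish as $N\to\infty$ for fixed $t_0$, only its $\limsup_N$ is controlled by Assertion~\ref{as17} and then sent to $0$ as $t_0\to\infty$---but the intended $\epsilon$-argument is clear and correct.)

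The one genuine difference is how you handle the second term $\max_{\zeta\in\ms G}\bb P^{\ms F}_\zeta[H_{\Delta_*}<T\beta_N]$. The paper invokes Proposition~\ref{s04}: since $\psi_{\ms F}(\eta^{\ms F}_{t\beta_N})$ converges in the Skorohod topology to $X_{\ms F}(t)$, this probability is bounded above by $\mb P_x[H_{G_{\mf q+1}}\le T]$ for $x$ in a recurrent class $G_a$, which is identically $0$. You instead give a direct rate estimate, bounding the hitting probability by $T\beta_N$ times the maximal exit rate from $\ms G$ to $\Delta_*$, and then using (H0) and (H1) to show that $\beta_N R^{\ms F}_N(\zeta',\zeta'')\to 0$ pointwise via the inequality $\mu_N(\zeta')R^{\ms F}_N(\zeta',\zeta'')\le \mu_N(\ms F_x)\,r^{\ms F}_N(\ms F_x,\ms F_y)$. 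Your route is more elementary---it avoids the Skorohod-convergence machinery of Proposition~\ref{s04} entirely---while the paper's route is cleaner to state once that proposition is already in hand. Both are valid; yours has the advantage of making explicit exactly which hypotheses on the $\ms F$-partition are used.
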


\begin{proof}
Since $\beta_N/ \beta^+_N\to 0$, a change of variables in the time
integral, similar to the one performed in the proof of Lemma
\ref{s15}, and the Markov property show that for every $\eta\in \ms
F$, every $T>0$ and every $N$ large enough,
\begin{equation*}
\bb E_\eta \Big[ \int_0^t 
\mb 1\{ \eta^{\ms F}_{s \beta^+_N} \in \Delta_* \} 
\, ds \Big] \;\le\; \frac {2t}T\,
\max_{\xi\in \ms F}  \, \bb E_\xi \Big[ \int_0^T
\mb 1\{ \eta^{\ms F}_{s \beta_N} \in \Delta_* \} 
\, ds \Big]\;.
\end{equation*}
Note that the process on the right hand side is speeded up by
$\beta_N$ instead of $\beta^+_N$.

We estimate the expression on the right hand side of the previous
formula. We may, of course, restrict the maximum to $\Delta_*$. Let
$T_1$ be the first time the trace process $\eta^{\ms F}_t$ hits $\ms
G$ and let $T_2$ be the time it takes for the process to return to
$\Delta_*$ after $T_1$:
\begin{equation*}
T_1 \;=\; H_{\ms G}\; , \quad
T_2 \;=\; \inf \big\{ s> 0 : \eta^{\ms F}_{T_1+s} \in \Delta_*\big\} \; .
\end{equation*}

Fix $\eta\in \Delta_*$ and note that
\begin{equation*}
\begin{split}
&  \bb E_\eta \Big[ \frac 1T \int_0^T
\mb 1\{ \eta^{\ms F}_{s \beta_N} \in \Delta_* \} 
\, ds \Big]  \\
& \qquad \;\le\; 
\bb P^{\ms F}_\eta \big[ T_1 > t_0 \beta_N \big]\; +\; 
\bb P^{\ms F}_\eta \big[ T_{2} \le T \beta_N \big] 
\;+\; \frac{t_0}T
\end{split}
\end{equation*}
for all $t_0>0$. By Assertion \ref{as17}, the first term on the right
hand side vanishes as $N\uparrow\infty$ and then
$t_0\uparrow\infty$. On the other hand, by the strong Markov property,
the second term is bounded by $\max_{\xi\in \ms G} \bb P^{\ms F}_\xi [
H_{\Delta_*} \le T \beta_N ]$. Since, by Proposition \ref{s04}, the
process $\psi_{\ms F}(\eta^{\ms F}_{t\beta_N})$ converges in the
Skorohod topology to the Markov chain $X_{\ms F}(t)$,
\begin{equation*}
\limsup_{N\to\infty} \max_{\xi\in \ms G} \bb P^{\ms F}_\xi [ H_{\Delta_*} \le T
\beta_N ] \;\le\;
\max_{1\le a\le \mf q} \max_{x\in G_a} \mb P_x [ H_{G_{\mf q+1}} \le T ]\;,
\end{equation*}
where, as in the proof of the previous assertion, $\mb P_x$ represents
the distribution of the chain $X_{\ms F}(t)$ starting from $x$. Since
the sets $G_a$ are recurrent classes for the chain $X_{\ms F}(t)$,
$r_{\ms F}(x,y)=0$ for all $x\in \cup_{1\le a\le \mf q} G_a$, $y\in
G_{\mf q+1}$. Therefore, the previous probability is equal to $0$ for
all $T>0$, which completes the proof of the lemma.
\end{proof}

\end{document}